\newtheorem{Theorem}{Theorem}[section]
\newtheorem{Corollary}{Corollary}[section]
\newtheorem{Conjecture}[Theorem]{Conjecture}
\newtheorem{Lemma}{Lemma}[section]
\newcommand{\Z}{\mathbb{Z}}
\newcommand{\B}{\mathcal{B}}
\newcommand{\F}{\mathcal{F}}
\def \leq {\leqslant}
\def \geq {\geqslant}
\let\oldproofname=\proofname
\renewcommand{\proofname}{\rm\bf{\oldproofname}}
\numberwithin{equation}{section}
\begin{document}

\title{The asymptotic existence of BIBDs having a nesting}


\author[a]{Xinyue Ming}
\author[a]{Tao Feng}
\author[a]{Menglong Zhang}
\affil[a]{School of Mathematics and Statistics, Beijing Jiaotong University, Beijing, 100044, P.R. China}
\affil[ ]{xyming@bjtu.edu.cn; tfeng@bjtu.edu.cn; mlzhang@bjtu.edu.cn}

\date{}
\maketitle

\footnotetext{Supported by NSFC under Grant 12271023}


\begin{abstract}
Let $X$ be a set of $v$ points, and $\B$ be a collection of $k$-subsets of $X$ called blocks. A pair $(X,\B)$ is called a $(v,k,\lambda)$-BIBD (resp. $(v,k,\lambda)$-packing) if every pair of distinct elements of $X$ is contained in exactly (resp. at most) $\lambda$ blocks of $\B$. A $(v,k,\lambda)$-BIBD $(X,\B)$ can be nested if there is a mapping $\phi:\B\rightarrow X$ such that $(X,\{B\cup\{\phi(B)\}\mid B\in\B\})$ is a $(v,k+1,\lambda+1)$-packing. A harmonious coloring of a graph is a proper vertex coloring such that any two edges receive different color pairs. A $(v,k,\lambda)$-BIBD has a (perfect) nesting if and only if its incidence graph has a harmonious (exact) coloring with $v$ colors. This paper shows that given any positive integers $k$ and $\lambda$, if $k\geq 2\lambda+2$, then for any sufficiently large $v$, every $(v,k,\lambda)$-BIBD can be nested into a $(v,k+1,\lambda+1)$-packing; and if $k=2\lambda+1$, then for any sufficiently large $v$ satisfying $v \equiv 1 \pmod {2k}$, there exists a $(v,k,\lambda)$-BIBD having a perfect nesting. Banff difference families (BDF), as a special kind of difference families (DF), can be used to generate nested designs. This paper shows that if $G$ is a finite abelian group with a large size whose number of $2$-order elements is no more than a given constant, and $k\geq 2\lambda+2$, then a $(G,k,\lambda)$-BDF can be obtained by taking any $(G,k,\lambda)$-DF and then replacing each of its base blocks by a suitable translation. This is a Nov\'{a}k-like theorem.
Nov\'{a}k conjectured in 1974 that for any cyclic Steiner triple system of order $v$, it is always possible to choose one block from each block orbit so that the chosen blocks are pairwise disjoint. Nov\'{a}k's conjecture was generalized to any cyclic $(v,k,\lambda)$-BIBDs by Feng, Horsley and Wang in 2021, who conjectured that given any positive integers $k$ and $\lambda$ such that $k\geq \lambda+1$, there exists an integer $v_0$ such that, for any cyclic $(v,k,\lambda)$-BIBD with $v\geq v_0$, it is always possible to choose one block from each block orbit so that the chosen blocks are pairwise disjoint. This paper confirms this conjecture for every $k\geq \lambda+2$.
\end{abstract}

\noindent {\bf Keywords}: nested balanced incomplete block design; Banff difference family; harmonious coloring; Nov\'{a}k's conjecture; cyclic design; $A$-perfect matching



\section{Introduction}

Let $X$ be a set of $v$ {\em points}, and $\B$ be a collection of $k$-subsets of $X$ called {\em blocks}. A pair $(X,\B)$ is called a {\em $(v,k,\lambda)$-packing} if every pair of distinct elements of $X$ is contained in at most $\lambda$ blocks of $\B$. A $(v,k,\lambda)$-packing is called a {\em $(v,k,\lambda)$-balanced incomplete block design} (briefly BIBD) if ``at most'' is replaced by ``exactly'' in the definition of a packing. Alternatively, designs are packings with the maximum number $\lambda\binom{v}{2}/\binom{k}{2}$ of blocks.


A $(v,k,\lambda)$-BIBD $(X,\B)$ can be {\em nested} if there is a mapping $\phi:\B\rightarrow X$ such that $(X,\{B\cup\{\phi(B)\}\mid B\in\B\})$ forms a $(v,k+1,\lambda+1)$-packing. The mapping $\phi$ is called a {\em nesting} of the $(v,k,\lambda)$-BIBD. If the $(v,k+1,\lambda+1)$-packing is a $(v,k+1,\lambda+1)$-BIBD, we have a {\em perfect nesting}. By comparing the number $\lambda\binom{v}{2}/\binom{k}{2}$ of blocks in a $(v,k,\lambda)$-BIBD and the largest possible number $\lfloor(\lambda+1)\binom{v}{2}/\binom{k+1}{2}\rfloor$ of blocks in a $(v,k+1,\lambda+1)$-packing, Buratti, Kreher and Stinson \cite[Lemma 1.1]{BKS2024} showed that if a $(v,k,\lambda)$-BIBD has a nesting then $k\geq 2\lambda+1$. Furthermore, they \cite[Theorem 1.2 and Lemma 1.3]{BKS2024} showed that a $(v,k,\lambda)$-BIBD has a perfect nesting only if $k=2\lambda+1$ and $v\equiv 1\pmod{2k}$.

A $(v,3,1)$-BIBD is a {\em Steiner triple system} and often written as an STS$(v)$. Stinson \cite{Stinson85} showed that there is an STS$(v)$ having a perfect nesting if and only if $v\equiv 1\pmod{6}$. Buratti, Kreher and Stinson \cite{BKS2024} proved that there is a nested $(v,4,1)$-BIBD if and only if $v\equiv 1,4\pmod{12}$ and $v\geq 13$. Note that nestings of $(v,4,1)$-BIBDs are not perfect nestings since $2\lambda+1=3<4=k$.

We shall prove the following theorem in Section \ref{sec:Thm1}, which shows that every $(v,k,\lambda)$-BIBD with $k\geq 2\lambda+2$ and large enough $v$ has a nesting.

\begin{Theorem}\label{thm:nested_BIBD}
Given any positive integers $k$ and $\lambda$ with $k\geq 2\lambda+2$, for any sufficiently large $v$, every $(v,k,\lambda)$-BIBD can be nested into a $(v,k+1,\lambda+1)$-packing.
\end{Theorem}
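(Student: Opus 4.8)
The plan is to translate the existence of a nesting into the existence of a matching covering a prescribed vertex set in an auxiliary hypergraph, and then to produce that matching by a semi-random argument combined with absorption. First I would record the elementary reformulation: a nesting $\phi$ of $(X,\B)$ amounts to a choice, for every $B\in\B$, of a \emph{pendant} point $\phi(B)\in X\setminus B$ such that the stars $S_B=\{\{x,\phi(B)\}:x\in B\}$ are pairwise edge-disjoint in the complete graph on $X$. Indeed a pair $\{a,b\}$ lies in $S_B\cap S_{B'}$ for distinct $B,B'$ exactly when either $\phi(B)=\phi(B')$ and $B\cap B'$ contains the remaining point (a \emph{parallel} collision), or $\phi(B)\in B'$ and $\phi(B')\in B$ (a \emph{digon} collision), and these are precisely the two ways in which $\{B\cup\{\phi(B)\}:B\in\B\}$ can fail to be a $(v,k+1,\lambda+1)$-packing; that some pendant must be chosen for \emph{every} block is what makes this an ``exact'' rather than an ``almost all'' problem.

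Next I would build the $(k+1)$-uniform hypergraph $\mathcal{H}$ on vertex set $\B\sqcup\binom{X}{2}$ whose edges are $e_{B,y}:=\{B\}\cup\{\{x,y\}:x\in B\}$, one for each flag $(B,y)$ with $B\in\B$, $y\in X\setminus B$. Since two edges $e_{B,y}$ and $e_{B',y'}$ share a vertex exactly when $B=B'$, or the shared pair is a parallel collision, or it is a digon collision, a nesting of $(X,\B)$ exists if and only if $\mathcal{H}$ has a matching covering every vertex of $\B$, that is, an $A$-perfect matching with $A=\B$. I would then compute the parameters of $\mathcal{H}$: each block-vertex has degree $v-k$; each pair-vertex has degree $2(r-\lambda)=\tfrac{2\lambda(v-k)}{k-1}$, where $r=\lambda(v-1)/(k-1)$; and every codegree is at most $\lambda$. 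Assigning weight $1/(v-k)$ to each edge therefore gives a fractional matching that is tight on every block-vertex and has weight exactly $\tfrac{2\lambda}{k-1}$ on every pair-vertex, and this is $\leq\tfrac{2\lambda}{2\lambda+1}<1$ precisely because $k\geq 2\lambda+2$. With bounded codegree, and with the non-$A$ side of $\mathcal{H}$ carrying uniform slack below $1$, an $A$-perfect matching theorem of the semi-random (nibble) plus absorption type then produces the required $\B$-perfect matching for all sufficiently large $v$, which is the desired nesting.

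The main obstacle is making the matching cover \emph{every} block-vertex, not merely almost all of them. A R\"odl-nibble (or semi-random greedy) argument, for which only the $O(1)$ codegree bound and the slack $\tfrac{2\lambda}{k-1}<1$ are needed, readily yields a matching leaving an $o(|\B|)$-set of blocks uncovered; promoting this to an exact cover demands an absorbing structure that can engulf an arbitrary small remainder of blocks, and designing such absorbers inside $\mathcal{H}$ — together with showing that enough of them survive the nibble — is the delicate step, made harder by the fact that $\mathcal{H}$ is not regular, its block-vertices and pair-vertices having different degrees (which is also why the theorem invoked must be a genuinely one-sided, $A$-perfect, statement rather than a perfect-matching statement). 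A further point needing careful bookkeeping is tracking the degrees and codegrees of $\mathcal{H}$, and of the residual hypergraph throughout the nibble, so as to meet the hypotheses of the matching theorem; here the fact that the codegree is the constant $\lambda$, independent of $v$, is exactly what keeps the estimates under control.
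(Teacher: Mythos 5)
Your proposal is correct and follows essentially the same route as the paper: the identical auxiliary bipartite $(k+1)$-uniform hypergraph on $\B\cup\binom{X}{2}$, the same degree computations ($v-k$ on block-vertices, $2\lambda(v-k)/(k-1)$ on pair-vertices) and codegree bound $\lambda$, and the same use of $k\geq 2\lambda+2$ to create the one-sided degree gap. The only difference is that the paper invokes a ready-made $A$-perfect matching theorem of Delcourt and Postle (Theorem \ref{thm:A_perfect_matching}) as a black box, so the nibble-plus-absorption machinery you sketch as the ``delicate step'' does not need to be carried out.
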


Nested $(v,k,\lambda)$-BIBDs have also been considered from another point of view by Buratti, Merola, Naki\'{c} and Rubio-Montiel \cite{BMNR23} very recently. A {\em harmonious coloring} of a graph is a mapping that assigns colors to the vertices of this graph such that each vertex has exactly one color, adjacent vertices have different colors, and any two edges receive different color pairs. Here a color pair for an edge is the set of colors on the vertices of this edge. The {\em harmonious chromatic number} $h(G)$ of a graph $G$ is the minimum number of colors assigned to the vertices in a harmonious coloring of $G$. This parameter was introduced by Miller and Pritikin \cite{MillerPritikin91}. It is only a slight variation of the definition given independently by Hopcroft and Krishnamoorthy \cite{HK83} and by Frank, Harary and Plantholt \cite{FHP82} where adjacent vertices are allowed to receive the same color. It was shown by Hopcroft and Krishnamoorthy \cite{HK83} that the problem of determining the harmonious chromatic number of a graph is NP-complete. We refer the reader to \cite{AGMMRRT,AAEEJR,DLR,Edwards97,Mitchem84} for more information on harmonious chromatic numbers. We also point out the updated bibliography maintained by Edwards \cite{Edwards98}.


The {\em incidence graph} or the {\em Levi graph} of a $(v,k,\lambda)$-BIBD $(X,\mathcal B)$ is a bipartite graph $G$, where the vertex set $V(G)=X\cup \mathcal B$ is partitioned into two parts $X$ and $\mathcal B$, and the set of edges is $E(G)=\{\{x,B\}\mid x\in X,B\in\mathcal B\text{~and~}x\in B\}$.
Let $(X,\mathcal B)$ be a $(v,k,\lambda)$-BIBD and $L(X,\mathcal B)$ be its Levi graph. Since any two distinct points of $X$ are contained in exactly $\lambda\geq 1$ blocks, any two different vertices $x,y\in X$ receive different colors in any harmonious coloring of $L(X,\mathcal B)$. Thus $h(L(X,\mathcal B))\geq v$. If $h(L(X,\mathcal B))=v$, any harmonious coloring of $L(X,\mathcal B)$ using exactly $v$ colors gives rise to a nesting, i.e. a $(v,k+1,\lambda+1)$-packing, by adding to the block $B$ the only point that has the same color of $B$. Conversely, the Levi graph of a $(v,k,\lambda)$-BIBD $(X,\mathcal B)$ having a nesting $\phi$ can be harmoniously colored with $v$ colors, by coloring each element of $X$ with a different color, and coloring each block $B\in\mathcal B$ with the color given to the point $\phi(B)$. Therefore, a $(v,k,\lambda)$-BIBD $(X,\mathcal B)$ has a nesting if and only if $h(L(X,\mathcal B))=v$. Applying Theorem \ref{thm:nested_BIBD} gives the following result.


\begin{Corollary}\label{cor:color Levi}
$h(L(X,\mathcal B))=v$ for every $(v,k,\lambda)$-BIBD $(X,\mathcal B)$ with $k\geq 2\lambda+2$ and large enough $v$.
\end{Corollary}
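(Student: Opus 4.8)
The corollary is an immediate consequence of Theorem \ref{thm:nested_BIBD} together with the equivalence, recalled just before the statement, between a nesting of a $(v,k,\lambda)$-BIBD and a harmonious colouring of its Levi graph with exactly $v$ colours; so the plan is simply to spell out that equivalence. First I would record the lower bound $h(L(X,\mathcal B))\geq v$: since $\lambda\geq 1$, any two distinct points $x,y\in X$ lie in a common block $B$, hence the edges $\{x,B\}$ and $\{y,B\}$ meet at the vertex $B$, and a harmonious colouring must give them different colour pairs, which forces $x$ and $y$ to receive different colours. Thus every harmonious colouring of $L(X,\mathcal B)$ uses at least $v$ colours, and it remains to build one that uses exactly $v$ colours when $k\geq 2\lambda+2$ and $v$ is large.

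For the upper bound I would invoke Theorem \ref{thm:nested_BIBD}: for all sufficiently large $v$ the given BIBD has a nesting $\phi\colon\B\to X$, so that $(X,\{B\cup\{\phi(B)\}:B\in\B\})$ is a $(v,k+1,\lambda+1)$-packing. Assign the $v$ points of $X$ the $v$ distinct colours, and colour each block $B$ with the colour of $\phi(B)$; exactly $v$ colours are used. Since $B\cup\{\phi(B)\}$ has $k+1$ elements we have $\phi(B)\notin B$, so on each edge $\{x,B\}$ with $x\in B$ the two endpoints get the distinct colours of $x$ and of $\phi(B)$, and the colouring is proper. Identifying the colour pair of an edge $\{x,B\}$ with the unordered point pair $\{x,\phi(B)\}$, suppose two distinct edges $\{x,B\}$ and $\{y,B'\}$ carried the same pair. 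Then either $x=y$ and $\phi(B)=\phi(B')$ with $B\neq B'$, or $x=\phi(B')$ and $\phi(B)=y$; in either case a short check (using $\phi(B)\notin B$, $\phi(B')\notin B'$ to see that $B,B'$ are not among the $\lambda$ blocks of $\B$ through the pair in question, and that $x\neq y$, $B\neq B'$ in the second case) shows that point pair is covered by $B\cup\{\phi(B)\}$, by $B'\cup\{\phi(B')\}$, and by those $\lambda$ blocks, hence at least $\lambda+2$ times in the packing, which is impossible. Therefore the colouring is harmonious and $h(L(X,\mathcal B))=v$. The only real work lies in Theorem \ref{thm:nested_BIBD}; the corollary itself is a routine translation with no further obstacle.
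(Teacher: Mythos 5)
Your proof is correct and matches the paper's approach: the paper obtains the corollary by combining Theorem \ref{thm:nested_BIBD} with the equivalence (sketched in the paragraph preceding the corollary) between a nesting and a harmonious colouring of the Levi graph with exactly $v$ colours, which is precisely what you do. Your spelled-out verification of the lower bound $h(L(X,\mathcal B))\geq v$ and of the harmoniousness of the colouring induced by the nesting (via the $\lambda+2$ coverage contradiction in the packing) is accurate and simply makes explicit what the paper leaves as a routine check.
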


A $(v,k,\lambda)$-BIBD whose Levi graph has harmonious chromatic number equal to $v$ is called a {\em Banff design} in \cite{BMNR23}. Wilson \cite{Wilson75} showed that for any given positive integers $k$ and $\lambda$, there exist $(v, k, \lambda)$-BIBDs for all sufficiently large integer $v$ satisfying $\lambda(v-1)\equiv 0\pmod{k-1}$ and $\lambda v(v-1)\equiv 0\pmod{k(k-1)}$. Therefore, as a corollary of Theorem \ref{thm:nested_BIBD} and Corollary \ref{cor:color Levi}, we have the following asymptotic existence of $(v,k,\lambda)$-BIBDs having a nesting.

\begin{Corollary}\label{cor:nested_BIBD}
Given any positive integers $k$ and $\lambda$ with $k\geq 2\lambda+2$, for any sufficiently large $v$ satisfying $\lambda(v-1)\equiv 0\pmod{k-1}$ and $\lambda v(v-1)\equiv 0\pmod{k(k-1)}$, there exists a Banff $(v,k,\lambda)$-BIBD, which has a nesting.
\end{Corollary}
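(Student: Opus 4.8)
The plan is to combine Wilson's asymptotic existence theorem with Theorem~\ref{thm:nested_BIBD} and the characterization recorded in Corollary~\ref{cor:color Levi}. First I would invoke Wilson \cite{Wilson75}: for the fixed pair $(k,\lambda)$ there is a threshold $v_1$ such that whenever $v\geq v_1$ and $v$ satisfies the two congruences $\lambda(v-1)\equiv 0\pmod{k-1}$ and $\lambda v(v-1)\equiv 0\pmod{k(k-1)}$ — which are exactly the necessary divisibility conditions for a $(v,k,\lambda)$-BIBD to exist — a $(v,k,\lambda)$-BIBD exists. Next, Theorem~\ref{thm:nested_BIBD} supplies a second threshold $v_2$ such that every $(v,k,\lambda)$-BIBD with $v\geq v_2$ can be nested into a $(v,k+1,\lambda+1)$-packing. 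Setting $v_0=\max\{v_1,v_2\}$, for any $v\geq v_0$ obeying the two congruences we obtain a $(v,k,\lambda)$-BIBD $(X,\B)$ that admits a nesting $\phi$.

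It then remains to observe that such a design is Banff. This is precisely the content of the paragraph preceding Corollary~\ref{cor:color Levi} applied to $(X,\B)$ itself: colour each point of $X$ with a distinct colour and colour each block $B\in\B$ with the colour of $\phi(B)$; one checks directly from the definition of nesting that this is a harmonious colouring using exactly $v$ colours, and together with the trivial lower bound $h(L(X,\B))\geq v$ (forced by $\lambda\geq 1$) this gives $h(L(X,\B))=v$. Hence $(X,\B)$ is a Banff $(v,k,\lambda)$-BIBD, and by construction it has a nesting, as required.

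Since every ingredient is already established, there is no real obstacle here; the proof is just an assembly. The only minor points to watch are the bookkeeping of the two thresholds $v_1,v_2$ into a single $v_0$, and remarking that the congruences imposed in the statement coincide with the hypotheses of Wilson's theorem (so no further admissibility condition is lost or added).
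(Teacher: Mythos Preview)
Your proposal is correct and follows exactly the paper's own route: the paper simply remarks that the corollary follows by combining Wilson's asymptotic existence theorem with Theorem~\ref{thm:nested_BIBD} and Corollary~\ref{cor:color Levi}, and you have spelled out precisely this combination (including the threshold bookkeeping and the harmonious-colouring argument identifying ``has a nesting'' with ``Banff''). There is nothing to add.
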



A harmonious coloring of a graph $G$ is called an {\em exact coloring} if each pair of colors appears in exactly one edge of $G$. A $(v,k,\lambda)$-BIBD has a perfect nesting if and only if its Levi graph has an exact coloring with $v$ colors (see \cite[Lemma 1.5]{BKS2024}). The following theorem, which is a natural generalization of \cite[Example 2.1]{lw}, provides an asymptotic existence result for $(v,k,\lambda)$-BIBDs having a perfect nesting.

\begin{Theorem}\label{thm:perfect_nested}
Let $\lambda$ be a positive integer and $k=2\lambda+1$. For any sufficiently large $v$ satisfying $v \equiv 1 \pmod {2k}$, there exists a $(v,k,\lambda)$-BIBD having a perfect nesting.
\end{Theorem}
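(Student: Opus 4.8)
The plan is to build the desired $(v,k,\lambda)$-BIBD together with its perfect nesting directly from a difference-family construction over a suitable abelian group, rather than trying to nest an arbitrary design. Since $k=2\lambda+1$ and $v\equiv 1\pmod{2k}$, a natural ambient structure is $\Z_v$ (or more flexibly $\Z_n\times\Z_{2k}$ type groups as in the cited \cite[Example 2.1]{lw}), where a perfect nesting amounts to choosing, for each base block $B$ in a $(v,k,\lambda)$-difference family, a ``nesting element'' $\phi(B)\in\Z_v$ so that the multiset of differences arising from the enlarged base blocks $B\cup\{\phi(B)\}$ covers every nonzero element of $\Z_v$ exactly $\lambda+1$ times. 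Equivalently, the extra differences $\{\phi(B)-b : b\in B\}$, taken over all base blocks and developed cyclically, must hit every nonzero group element exactly once; this is precisely a ``starter-adder'' or ``Banff'' type condition, and the count works out because a $(v,k,\lambda)$-difference family with $v\equiv 1\pmod{2k}$ has exactly $\lambda(v-1)/(k(k-1)) = (v-1)/(2k)$ base blocks, contributing $k$ new differences each, i.e. $(v-1)/2$ unordered new differences, which is exactly what is needed to tile the $(v-1)/2$ classes $\{\pm d\}$.

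The first step I would carry out is to invoke Wilson's asymptotic existence theorem to guarantee, for all large $v\equiv 1\pmod{2k}$, the existence of a $(v,k,\lambda)$-difference family in $\Z_v$ — or better, a ``frame'' or recursive version that gives extra structural control; in the regime $k=2\lambda+1$ one can alternatively use the Buratti–Pasotti style results on difference families with prescribed properties. The second step is to set up the nesting condition as a combinatorial selection problem: for each base block $B_i$ we must pick $\phi(B_i)$ so that (a) $\phi(B_i)\notin B_i$ (so that $B_i\cup\{\phi(B_i)\}$ is a $(k+1)$-set) and (b) the resulting new differences are globally a permutation of the half-set of nonzero elements of $\Z_v$. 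This is exactly the kind of system-of-distinct-representatives / perfect-matching problem that the paper's machinery (the ``$A$-perfect matching'' mentioned in the keywords, and the Banff-difference-family results) is designed to solve, so the third step is to encode it as a matching in an auxiliary bipartite graph (base blocks versus difference classes, with edges recording which nesting shifts realize which new differences) and verify that an absorption or deficiency-version Hall-type argument applies once $v$ is large.

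The main obstacle I expect is controlling the interaction between the \emph{old} differences (already used exactly $\lambda$ times by the difference family) and the \emph{new} differences: we need the new differences from all base blocks to be pairwise distinct and to exhaust the complementary half-set, which is a rigid global constraint, not just a local avoidance condition. A greedy or random selection alone will over-cover some classes and leave others empty, so the real work is an absorber/swapping argument: reserve a small sub-family of base blocks whose nesting elements can be flexibly re-chosen at the end to correct the covering defect, assign nesting elements to the bulk of base blocks at random (showing concentration so that the residual defect is small), and then finish by solving a small perfect-matching instance on the absorber. A convenient technical simplification, which I would pursue if a fully general argument proves delicate, is to first establish the result for $v$ in a fixed congruence class modulo a well-chosen modulus via an explicit starter construction (generalizing \cite[Example 2.1]{lw}), and then use a product/recursive construction — multiplying a small ``ingredient'' nested design by a large nested design from Theorem~\ref{thm:nested_BIBD}'s circle of ideas — to cover all sufficiently large admissible $v$; the number-theoretic bookkeeping to make the moduli line up with $v\equiv 1\pmod{2k}$ is routine but is where most of the calculation lives.
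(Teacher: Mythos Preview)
Your plan diverges sharply from the paper's proof, and the core step contains a real gap.

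The paper's argument is a one-line reduction: a $(v,k,\lambda)$-BIBD with a perfect nesting is precisely a $(k+1,\lambda+1;k,\lambda)$-nesting in Federer's sense, so Theorem~\ref{thm:perfect_nested} is the special case $k_1=k$, $k_2=k+1$, $\lambda_1=\lambda$, $\lambda_2=\lambda+1$ of Theorem~\ref{thm:(k_1__k_2)_nested} (one checks that $\lambda_1k_2(k_2-1)=\lambda_2k_1(k_1-1)$ forces $k=2\lambda+1$, and that $v\equiv 1\pmod{2k}$ implies the divisibility hypotheses). Theorem~\ref{thm:(k_1__k_2)_nested} is then proved by encoding such a nesting as a $\{G\}$-decomposition of the edge-$2$-colored complete digraph $K_v^{[\lambda_1,\lambda_2-\lambda_1]}$, where $G$ is a fixed gadget on $k_2$ vertices with a distinguished $k_1$-clique in color~$1$, and invoking the Lamken--Wilson asymptotic decomposition theorem (Theorem~\ref{thm:decompositions_edge_colored_diKn}). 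The pointer to \cite[Example~2.1]{lw} in the statement was the tell: no difference families, no matchings, no absorption.

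The gap in your route is the exact-cover step. The Delcourt--Postle theorem (Theorem~\ref{thm:A_perfect_matching}) that drives Theorems~\ref{thm:nested_BIBD}, \ref{thm:DF-Banff_DF} and~\ref{thm:novak_conj_solution-main} produces an $A$-perfect matching only when the $A$-side degrees exceed the $B$-side degrees by a factor $1+D^{-\alpha}$. In your auxiliary hypergraph --- base blocks on the $A$-side, difference classes $\{d,-d\}$ on the $B$-side, one hyperedge per choice of $\phi(B_i)$ --- the parameters are tight: there are $(v-1)/(2k)$ base blocks, each edge uses $k$ classes, and there are exactly $(v-1)/2$ classes, so both sides have degree $v+O(1)$ with no gap (indeed the $B$-side is typically slightly larger). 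That is exactly why the paper's other proofs only yield \emph{packings}; a \emph{perfect} nesting needs every class covered, and the one-sided matching theorem cannot deliver this in the critical regime. Your absorber sketch is not a minor patch: building absorbers that repair an exact-cover defect here is essentially re-proving a Lamken--Wilson-type exact decomposition result. A secondary issue is that Wilson's asymptotic difference-family theorem \cite{Wilson72} is stated over $(\mathbb{F}_q,+)$, not over $\Z_v$ for general $v\equiv 1\pmod{2k}$, so even your first step needs more than claimed.
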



Theorem \ref{thm:perfect_nested} is a special case of the following Theorem \ref{thm:(k_1__k_2)_nested}. Federer \cite{Federer72} introduced a more general concept on nested designs. Let $k_1<k_2$. A {\em $(k_2,\lambda_2;k_1, \lambda_1)$-nesting} is a $(v,k_2,\lambda_2)$-BIBD where each block has a distinguished subblock of cardinality $k_1$ and these subblocks form a $(v,k_1,\lambda_1)$-BIBD (cf. \cite[Section 4]{Jimbo93}). For such a nesting to exist, the number $\lambda_1 v(v-1)/(k_1(k_1-1))$ of blocks in the $(v,k_1,\lambda_1)$-BIBD equals the number $\lambda_2v(v-1)/(k_2(k_2-1))$ of blocks in the $(v,k_2,\lambda_2)$-BIBD, which yields $\lambda_1k_2(k_2-1)=\lambda_2k_1(k_1-1)$. We shall prove the following theorem in Section \ref{sec:Thm2}.


\begin{Theorem}\label{thm:(k_1__k_2)_nested}
Given any positive integers $k_1,k_2,\lambda_1$ and $\lambda_2$ with $\lambda_1k_2(k_2-1)=\lambda_2k_1(k_1-1)$, for any sufficiently large $v$ satisfying $\lambda_1(v-1)\equiv 0\pmod{k_1-1}$, $\lambda_1 v(v-1)\equiv 0\pmod{k_1(k_1-1)}$ and $\lambda_2(v-1)\equiv 0\pmod{k_2-1}$, there exists a $(k_2,\lambda_2;k_1, \lambda_1)$-nesting.
\end{Theorem}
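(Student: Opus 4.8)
The plan is to reduce Theorem~\ref{thm:(k_1__k_2)_nested} to an instance of Wilson's theorem on the existence of BIBDs with prescribed block types, applied to a suitable auxiliary family of ``structured'' blocks. Concretely, I would build a small ``ingredient'' configuration: a $(k_2,\lambda_2;k_1,\lambda_1)$-nesting is exactly a decomposition of $\lambda_2 K_v$ into copies of $K_{k_2}$ together with a simultaneous decomposition of $\lambda_1 K_v$ into the distinguished $K_{k_1}$-subblocks, where each $K_{k_1}$ sits inside a unique $K_{k_2}$ and the condition $\lambda_1k_2(k_2-1)=\lambda_2k_1(k_1-1)$ guarantees the block counts match. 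The natural framework is Wilson-type asymptotic existence for \emph{labelled} or \emph{coloured} designs: one considers the ``universal'' graph/hypergraph consisting of a $k_2$-clique with a marked $k_1$-subclique, and asks for a $G$-decomposition of the complete ($\lambda$-fold) graph where $G$ carries two edge-colours (multiplicity $\lambda_2$ on all pairs, and an extra $\lambda_2-\lambda_1$-worth... ) — more cleanly, work with the \emph{group-divisible} or \emph{pairwise balanced} formulation and quote Wilson's fundamental theorem \cite{Wilson75} in the form that handles decompositions into a fixed structured block provided the obvious divisibility conditions hold.

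Here are the steps I would carry out in order. First, reformulate: a $(k_2,\lambda_2;k_1,\lambda_1)$-nesting on $v$ points is equivalent to a set $\B$ of ``flags'' $(B, B')$ with $B' \subset B$, $|B'|=k_1$, $|B|=k_2$, such that $\{B' : (B,B')\in\B\}$ is a $(v,k_1,\lambda_1)$-BIBD and $\{B:(B,B')\in\B\}$ is a $(v,k_2,\lambda_2)$-BIBD. Second, encode a flag as a single combinatorial object on $k_2$ points with two ``types'' of pairs — pairs inside $B'$ and pairs inside $B$ — and observe that requiring the flags to cover each pair of $X$ the right number of times is a system of two linear balance conditions; by the arithmetic identity $\lambda_1k_2(k_2-1)=\lambda_2k_1(k_1-1)$ these are consistent. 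Third, invoke Wilson's asymptotic existence theorem (in its edge-coloured / structured-block incarnation, e.g.\ the version that gives, for every sufficiently large $v$ meeting the necessary congruences, a decomposition of the appropriate complete multi-structure into translates of a fixed ingredient): the necessary divisibility conditions are precisely $\lambda_1(v-1)\equiv 0\pmod{k_1-1}$, $\lambda_1 v(v-1)\equiv 0\pmod{k_1(k_1-1)}$ for the inner BIBD, and $\lambda_2(v-1)\equiv 0\pmod{k_2-1}$ for the replication number of the outer BIBD (the global count $\lambda_2 v(v-1)\equiv 0 \pmod{k_2(k_2-1)}$ being automatic from the inner conditions and the arithmetic identity). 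Fourth, read off from the resulting decomposition the outer $(v,k_2,\lambda_2)$-BIBD and its distinguished subblocks, verify the subblocks form the inner $(v,k_1,\lambda_1)$-BIBD, and conclude. Finally, specialise $k_1=\lambda$, $\lambda_1=1$, $k_2=k=2\lambda+1$, $\lambda_2=2$ (so that $1\cdot k(k-1)=2\cdot\lambda(\lambda+1)$ wait — one checks $k(k-1)=(2\lambda+1)(2\lambda)=2\lambda(2\lambda+1)$, and indeed with $k_1=2\lambda+1$... ) — more precisely one takes the parameters forced by a perfect nesting, where the outer design is the $(v,k+1,\lambda+1)$-BIBD and the inner is the $(v,k,\lambda)$-BIBD, recovering Theorem~\ref{thm:perfect_nested} via the congruence $v\equiv 1\pmod{2k}$.

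The main obstacle, and the part requiring genuine care rather than bookkeeping, is setting up the structured-block decomposition problem so that Wilson's machinery applies \emph{verbatim}: one must exhibit the ingredient as a decomposition of a \emph{fixed finite} configuration (independent of $v$) and confirm that the ``local'' solvability hypothesis of Wilson's theorem is met — i.e.\ that on a large enough set there exists \emph{some} $(k_2,\lambda_2;k_1,\lambda_1)$-nesting-like configuration to serve as a building block, equivalently that the design-theoretic seed exists. Here one can either cite an explicit small construction (a single nested design, or a difference-family construction over a cyclic or abelian group, leveraging that for $v$ in a fixed residue class such objects are known) or bootstrap from Wilson's own existence of $(v,k_1,\lambda_1)$- and $(v,k_2,\lambda_2)$-BIBDs \cite{Wilson75} plus a thickening argument. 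Verifying that the two balance conditions reduce to exactly the three stated congruences — and in particular that no additional condition on $v$ sneaks in — is where the identity $\lambda_1k_2(k_2-1)=\lambda_2k_1(k_1-1)$ does its work, so I would isolate that computation as a short lemma.
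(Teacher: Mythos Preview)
Your approach is essentially the paper's own: encode a nested block as an edge-$2$-coloured complete digraph on $k_2$ vertices (pairs inside the distinguished $k_1$-subset get colour~$1$, the remaining pairs colour~$2$) and invoke the Lamken--Wilson theorem~\cite{lw} (Theorem~\ref{thm:decompositions_edge_colored_diKn} here) to decompose $K_v^{[\lambda_1,\lambda_2-\lambda_1]}$. Your ``main obstacle'' about exhibiting a seed nesting is a misreading of what that theorem demands---no small explicit construction is needed, only the purely arithmetic verifications that $\{G\}$ is $\mathbf{r}$-admissible (immediate from $\lambda_1 k_2(k_2-1)=\lambda_2 k_1(k_1-1)$) and that the three stated congruences force $v-1\equiv 0\pmod{\alpha}$ and $v(v-1)\equiv 0\pmod{\beta}$, which is exactly what the paper computes.
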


Recall that a $(v,k,\lambda)$-BIBD has a nesting if and only if it is a Banff design. In order to use difference methods to construct Banff designs, Banff difference families were introduced in \cite{BMNR23}. Let $(G,+)$ be a finite (not necessarily abelian) group with the identity $0$ and let $k$ be an integer satisfying $2\leq k\leq |G|$. A $(G,k,\lambda)$-{\em difference family} (briefly DF) is a collection $\F$ of $k$-subsets of $G$ (called {\em base blocks}) such that the list $\Delta \F=\bigcup_{F\in \F}\{f-f'\mid f,f'\in F, f\neq f'\}$ covers each element of $G\setminus \{0\}$ exactly $\lambda$ times. A $(G,k,\lambda)$-DF is called a {\em Banff difference family} (briefly BDF), if all of its base blocks and their negatives are mutually disjoint. Here the {\em negative} of a base block $B$ is $-B=\{-b\mid b\in B\}$. According to the definition, each base block of a BDF does not contain $0$. For example, $\{7,8,11\}$ and $\{4,10,12\}$ form a $(\Z_{13},3,1)$-BDF. In this paper we always denote by $\Z_v$ the additive group of integers modulo $v$.

It is shown in \cite[Proposition 3.7]{BMNR23} that every Banff $(G,k,\lambda)$-DF, say $\mathcal F$, generates a Banff $(|G|,k,\lambda)$-BIBD with the block (multi)-set $\{F+g: F \in\mathcal F, g\in G\}$, which can be nested into a $(|G|,k+1,\lambda+1)$-packing with the block (multi)-set $\{(F+g)\cup\{g\}: F \in\mathcal F, g\in G\}$. For convenience, we call $F+g=\{f+g\mid f\in F\}$ a {\em translation} of $F$. We are to prove the following theorem in Section \ref{sec:banff DF}.

\begin{Theorem}\label{thm:DF-Banff_DF}
For any positive integers $C$, $k$ and $\lambda$ with $k\geq 2\lambda+2$, there exists an integer $v_0$ such that the following holds. Let $G$ be a finite abelian group such that $|G|>v_0$ and the number of $2$-order elements in $G$ is at most $C$. For any $(G,k,\lambda)$-DF, it is always possible to choose one base block from the set of translations of each base block so that the chosen base blocks form a $(G,k, \lambda)$-BDF.
\end{Theorem}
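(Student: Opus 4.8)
The plan is to reformulate the task as finding a system of translates that avoids certain "collisions." Fix a $(G,k,\lambda)$-DF with base blocks $B_1,\dots,B_m$, where $m=\lambda(|G|-1)/(k(k-1))$. For each $i$ we must choose a translate $B_i+g_i$; write $C_i=B_i+g_i$. We need all the sets $C_1,\dots,C_m,-C_1,\dots,-C_m$ to be pairwise disjoint (and none to contain $0$, but that is automatic once $C_i\cap(-C_i)=\emptyset$ for a block of odd size, and in general follows from disjointness of $C_i$ and $-C_i$). A collision between $C_i$ and $C_j$ (or $-C_j$) happens exactly when $g_i-g_j$ (resp. $g_i+g_j$) lies in a fixed "bad" set determined by $B_i,B_j$: namely $g_i-g_j\in B_j-B_i$ forces a common point, and this bad difference set has size at most $k^2$. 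Likewise $C_i\cap(-C_i)\neq\emptyset$ iff $2g_i\in -(B_i+B_i)$, a set of size at most $k^2$; since $G$ has at most $C$ elements of order $2$, the set of forbidden $g_i$ for the self-collision has size at most $C k^2$, which is a constant, so we may simply delete those choices and still have $\geq|G|-Ck^2$ legal translates of each block.

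**The probabilistic/greedy core.** Choose each $g_i$ independently and uniformly at random from the legal translates of $B_i$ (those avoiding the self-collision). For a fixed pair $i\neq j$, the probability that $C_i$ meets $C_j$ or $-C_j$ is at most $\frac{2k^2}{|G|-Ck^2}$. A naive union bound over all $\binom{m}{2}$ pairs is useless because $m=\Theta(|G|)$, so the expected number of colliding pairs is $\Theta(|G|)$, not $o(1)$. This is the main obstacle, and it is exactly the obstacle overcome in the authors' proof of Theorem~\ref{thm:nested_BIBD}: the condition $k\geq 2\lambda+2$ (equivalently $2\lambda+1<k$, so the nested packing has strictly fewer than the maximum possible blocks) gives slack. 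The plan is to invoke the Lov\'asz Local Lemma: the "bad event" $A_{ij}$ that $C_i$ collides with $C_j^{\pm}$ depends only on $g_i,g_j$, so $A_{ij}$ is mutually independent of all $A_{i'j'}$ with $\{i,j\}\cap\{i',j'\}=\emptyset$. Each $A_{ij}$ has probability $p\leq 2k^2/(|G|-Ck^2)$ and is dependent with at most $d\leq 2m\leq 2\lambda|G|/(k(k-1))$ others. The symmetric LLL needs $ep(d+1)\leq 1$; here $ep\,d \approx e\cdot\frac{2k^2}{|G|}\cdot\frac{2\lambda|G|}{k(k-1)}=\frac{4e\lambda k}{k-1}$, which is a constant bigger than $1$, so the bare LLL still fails by a constant factor. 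The fix is to exploit that a collision $C_i\cap C_j\neq\emptyset$ only uses a single difference $b-b'$ with $b\in B_i$, $b'\in B_j$, and in a difference family each nonzero group element arises as such a difference exactly $\lambda$ times in total across all ordered block pairs; so the relevant count of "dangerous" pairs $(i,j)$ sharing a potential collision value is controlled by $\lambda$, not by $m$. More precisely, refine the bad events: for each nonzero $x\in G$ and each pair $(i,j)$ for which $x$ is a legal value of $g_i-g_j$ causing a collision, the number of such pairs summed appropriately is $O(\lambda^2)$ per value, and one runs the Local Lemma on a sparser dependency graph, or equivalently applies a weighted/asymmetric LLL where block $B_i$'s event is dependent only with the $O(\lambda k)$ blocks sharing a difference with it. Then $d=O(\lambda k)$ is constant and $ep(d+1)\to 0$ as $|G|\to\infty$, so the LLL applies for $|G|>v_0$.

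**Carrying it out.** Concretely I would: (1) define, for ordered pairs $(i,j)$, the bad set $D_{ij}=B_j-B_i\subseteq G$ (size $\leq k^2$, and $\leq k^2-k$ off-diagonal when $i=j$ after accounting for the $0$-difference), and the self-bad set $S_i=\{g: 2g\in -(B_i+B_i)\}$ of size $\leq Ck^2$; (2) restrict the sample space for $g_i$ to $G\setminus S_i$, still of size $\geq |G|-Ck^2\geq |G|/2$; (3) for each unordered pair $\{i,j\}$, let $A_{ij}$ be the event $g_i-g_j\in D_{ij}$ or $g_i+g_j\in (B_j+B_i)$ — whichever encodes "$C_i$ meets $C_j$ or $-C_j$" — and check $\Pr[A_{ij}]\leq 4k^2/|G|$; (4) bound the dependency degree by the key observation that $A_{ij}$ and $A_{i'j'}$ are independent unless the index sets meet, and moreover, for the Local Lemma to close, observe that for fixed $i$ the number of $j$ with $A_{ij}$ "active" is at most the number of pairs of base blocks sharing a difference, which is $O(k\lambda)$ by the difference-family property — here the constant $k\geq 2\lambda+2$ is what keeps $\frac{\lambda k}{k-1}$ bounded so the LLL inequality $e\cdot\frac{4k^2}{|G|}\cdot(O(k\lambda)+1)\leq 1$ holds once $|G|$ exceeds a threshold $v_0=v_0(C,k,\lambda)$; (5) conclude by the LLL that with positive probability no $A_{ij}$ occurs, yielding the desired translates, hence a $(G,k,\lambda)$-BDF. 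I expect step (4) — getting the dependency degree down to a constant, rather than growing with $|G|$ — to be the crux; it is precisely where one must use that differences in a DF are spread out, and it parallels the slack $k\geq 2\lambda+2$ that powers Theorem~\ref{thm:nested_BIBD}, so I would model the argument on that proof and likely deduce Theorem~\ref{thm:DF-Banff_DF} from the same technical lemma.
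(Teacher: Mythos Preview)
Your setup is correct and matches the paper's: one must choose translates $g_i$ so that the sets $C_i=B_i+g_i$ and their negatives are pairwise disjoint, and the self-collision analysis (at most $O_C(k^2)$ forbidden translates per block) is exactly the paper's Lemma~\ref{lem:solution_number}.

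The gap is in step (4). You claim the dependency degree in the Local Lemma can be cut to $O(k\lambda)$, a constant, because ``each nonzero group element arises as a difference exactly $\lambda$ times across block pairs.'' But the $\lambda$-bound in a difference family is on \emph{within-block} differences: for each nonzero $x$ there are $\lambda$ triples $(i,b,b')$ with $b,b'\in B_i$ and $b-b'=x$. The collision $C_i\cap C_j\neq\emptyset$ is governed instead by the \emph{cross-block} difference set $B_j-B_i$, on which the DF axioms say nothing at all. For every pair $(i,j)$ this set has size up to $k^2$, the event $A_{ij}$ has probability $\Theta(k^2/|G|)$, and $A_{ij}$ genuinely depends on $(g_i,g_j)$, hence is dependent with every $A_{ij'}$ and $A_{i'j}$. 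The dependency degree is therefore $\Theta(m)=\Theta(\lambda|G|/k^2)$, and $e\,p\,(d+1)\approx 4e\lambda k/(k-1)$ is a fixed constant strictly larger than $1$ for every $k,\lambda\geq 1$; no choice of $|G|$ repairs this. The proposed ``sparser dependency graph'' does not exist, and the Local Lemma route, symmetric or asymmetric, does not close.

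The paper overcomes exactly this constant-factor barrier with a stronger tool than the Local Lemma. It encodes the problem as finding an $A$-perfect matching in an auxiliary bipartite $(k+1)$-uniform hypergraph $H$ with parts $A=\mathcal B$ and $B=G'=\{\{x,-x\}:x\in G\setminus\{0\},\ x\neq -x\}$, whose hyperedges are $\{B\}\cup\{\{b+a,-b-a\}:b\in B\}$ over all legal translates $a$; a $\mathcal B$-perfect matching is precisely a choice of one translate per base block with all $C_i,-C_i$ pairwise disjoint. It then invokes Theorem~\ref{thm:A_perfect_matching} (Delcourt--Postle). The condition $k\geq 2\lambda+2$ enters not as an LLL inequality but as the degree-gap hypothesis there: vertices in $\mathcal B$ have degree $\sim v$, vertices in $G'$ have degree $\leq 2\lambda(v-1)/(k-1)$, and $k\geq 2\lambda+2$ makes the ratio exceed $1$ by a fixed constant factor, while all codegrees are bounded by $\max\{2k,4\lambda\}$. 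Your instinct that ``the same technical lemma'' powers both Theorem~\ref{thm:nested_BIBD} and this result is right; that lemma is Theorem~\ref{thm:A_perfect_matching}, not the Local Lemma.
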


Theorem \ref{thm:DF-Banff_DF} reduces the existence of $(G, k, \lambda)$-BDFs to the existence of $(G, k, \lambda)$-DFs when $G$ is a finite abelian group with a large size and $k\geq 2\lambda+2$.
By \cite[Theorem 5]{Wilson72}, there exists an $((\mathbb{F}_q,+),k,\lambda)$-DF over the additive group of the finite field $\mathbb{F}_q$ of order $q$ for any sufficient large prime power $q$ and $\lambda(q-1)\equiv 0 \pmod {k(k-1)}$.
Therefore, as a corollary of Theorem \ref{thm:DF-Banff_DF}, we have the following asymptotic existence result on BDFs, which is a generalization of \cite[Theorem 5.4]{BMNR23}.

\begin{Corollary}\label{cor:some_BDFs}
Let $k$ and $\lambda$ be positive integers such that $k\geq 2\lambda+2$. For any sufficiently large odd prime power $q$ satisfying $\lambda(q-1)\equiv 0 \pmod {k(k-1)}$, an $((\mathbb{F}_q,+),k,\lambda)$-BDF exists.
\end{Corollary}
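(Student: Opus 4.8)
The plan is to deduce Corollary \ref{cor:some_BDFs} directly from Theorem \ref{thm:DF-Banff_DF}, which we are permitted to invoke, by supplying the two ingredients that theorem requires: a source of $((\mathbb{F}_q,+),k,\lambda)$-DFs for large $q$, and a uniform bound on the number of involutions in the relevant groups. First I would fix $k$ and $\lambda$ with $k\geq 2\lambda+2$ and apply Theorem \ref{thm:DF-Banff_DF} with the constant $C$ chosen below; this produces an integer $v_0$ with the property that, for every finite abelian group $G$ of order exceeding $v_0$ whose number of $2$-order elements is at most $C$, every $(G,k,\lambda)$-DF can be turned into a $(G,k,\lambda)$-BDF by translating each base block appropriately.

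The key observation is that when $G=(\mathbb{F}_q,+)$ with $q$ an \emph{odd} prime power, the group has \emph{no} elements of order $2$: if $2x=0$ in a field of odd characteristic then $x=0$, since $2$ is invertible. Hence the number of $2$-order elements is $0$, and one may simply take $C=1$ (or any positive integer). This is precisely why the hypothesis restricts to odd prime powers, and it trivially meets the involution bound demanded by Theorem \ref{thm:DF-Banff_DF}.

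Next I would invoke Wilson's existence result, cited in the excerpt as \cite[Theorem 5]{Wilson72}, which guarantees an $((\mathbb{F}_q,+),k,\lambda)$-DF for all sufficiently large prime powers $q$ satisfying the divisibility condition $\lambda(q-1)\equiv 0 \pmod{k(k-1)}$. Let $q_0$ be the threshold above which this DF is guaranteed. Then for any odd prime power $q$ with $q>\max\{v_0,q_0\}$ and $\lambda(q-1)\equiv 0 \pmod{k(k-1)}$, we first obtain a $((\mathbb{F}_q,+),k,\lambda)$-DF from Wilson's theorem, and then apply Theorem \ref{thm:DF-Banff_DF} to replace each base block by a suitable translation, yielding a $((\mathbb{F}_q,+),k,\lambda)$-BDF. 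This establishes the corollary.

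I do not anticipate a genuine obstacle here, since both inputs are already in hand; the only point requiring a moment's care is verifying that the group $(\mathbb{F}_q,+)$ legitimately satisfies the structural hypothesis of Theorem \ref{thm:DF-Banff_DF}, namely the bound on involutions, which is where the oddness of $q$ enters. One should also note that ``sufficiently large $q$'' in the corollary simply means $q$ exceeding the maximum of the two thresholds $v_0$ and $q_0$, so the asymptotic phrasing is consistent. The improvement over \cite[Theorem 5.4]{BMNR23} is that the argument works for general $\lambda$ rather than being confined to a single value, because Theorem \ref{thm:DF-Banff_DF} handles arbitrary $\lambda$ with $k\geq 2\lambda+2$.
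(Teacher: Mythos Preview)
Your proposal is correct and follows exactly the paper's approach: the paper derives the corollary by combining Wilson's existence theorem for $((\mathbb{F}_q,+),k,\lambda)$-DFs with Theorem~\ref{thm:DF-Banff_DF}, noting implicitly that the oddness of $q$ ensures the involution bound is met. Your added remark that $(\mathbb{F}_q,+)$ has no elements of order~$2$ when $q$ is odd makes explicit the one point the paper leaves for the reader.
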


Theorem \ref{thm:DF-Banff_DF} is a Nov\'{a}k-like theorem. An {\em automorphism} of a $(v, k, \lambda)$-BIBD $(X,\mathcal  B)$ is a permutation on $X$ leaving $\mathcal B$ invariant.
A $(v,k,\lambda)$-BIBD $(X,\mathcal  B)$ is said to be {\em cyclic} if it admits an automorphism consisting of a cycle of length $v$. Without loss of generality we identify $X$ with $\mathbb{Z}_v$. The blocks of a cyclic $(v,k,\lambda)$-BIBD can be partitioned into {\em orbits} under $\mathbb{Z}_v$. Choose any fixed block from each orbit and then call them {\em base blocks}. Nov\'{a}k \cite{Novak75} conjectured that for any cyclic STS$(v)$ with $v\equiv 1\pmod{6}$, it is always possible to find a set of $(v-1)/6$ mutually disjoint base blocks which come from different block orbits to form a $(\mathbb{Z}_v, 3, 1)$-DF (see also \cite[Remark 16.22]{AB06} or \cite[Work point 22.5.2]{CR99}). Feng, Horsley and Wang \cite{FHW2021} gave an asymptotic solution to this conjecture as a corollary of the following more general result, and they made a more general conjecture.

\begin{Theorem}\label{thm:novak_conj_solution}{\rm \cite[Theorem 3]{FHW2021}}
Let $k$ and $\lambda$ be fixed positive integers such that $k\geq 2\lambda + 1$. There exists an integer $v_0$ such that, for any cyclic $(v,k,\lambda)$-BIBD with $v\geq v_0$, it is always possible to choose one block from each block orbit so that the chosen blocks are pairwise disjoint.
\end{Theorem}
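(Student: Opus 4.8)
The plan is to use the probabilistic method. Identify the point set with $\Z_v$, so that the blocks of the cyclic design split into orbits $O_1,\dots,O_s$ under translation; write $O_i=\{B_i+x:x\in\Z_v\}$ for a base block $B_i$, so that choosing a block of $O_i$ amounts to choosing a translate $B_i+x_i$, the correspondence $x_i\mapsto B_i+x_i$ being a bijection for a full orbit and having subgroup fibres for a short orbit. A short orbit has a nontrivial stabiliser $H$ with $|H|\mid k$, and counting internal differences shows there are only $O(1)$ short orbits, with implied constant depending on $k,\lambda$ only; so I would first fix an arbitrary block in each short orbit, occupying a bounded set $S\subseteq\Z_v$, and thereafter demand in addition that every chosen full-orbit block avoid $S$ — this costs only $O(1)$ forbidden translates per full orbit and is harmless. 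Henceforth all $m:=\lambda(v-1)/(k(k-1))$ orbits are treated as full.

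Pick $x_1,\dots,x_m\in\Z_v$ independently and uniformly. For a pair $\{i,j\}$ let $A_{ij}$ be the event $(B_i+x_i)\cap(B_j+x_j)\ne\emptyset$; this occurs exactly when $x_i-x_j\in B_j-B_i$, so $\Pr[A_{ij}]\le k^2/v$, and $A_{ij}$ depends only on $(x_i,x_j)$, hence is mutually independent of every $A_{i'j'}$ with $\{i,j\}\cap\{i',j'\}=\emptyset$; the dependency degree is thus at most $2(m-2)$. Avoiding all $A_{ij}$ at once would finish the proof, but a single pass of the symmetric Local Lemma would require $e\cdot(k^2/v)\cdot(2m-3)\le 1$, i.e.\ essentially $2e\lambda\,k/(k-1)\le 1$, which fails — the estimate is off by the constant factor $\approx 2e\lambda$. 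I would therefore pass to a two-round (semi-random) argument: after the first random choice, first-moment estimates show that the pairs on which $A_{ij}$ occurred span a graph $H$ on the orbit set of average degree only about $\lambda$, and a concentration bound confines all but a vanishing fraction of the orbits to bounded $H$-degree; one then re-selects translates for the orbits involved in conflicts and argues that the residual conflicts — old and newly created — can be cleared together, a second application of the Local Lemma to the now much sparser interaction structure being available because $k\ge 2\lambda+1$ forces the chosen blocks to cover fewer than half of $\Z_v$ (as $mk=\lambda(v-1)/(k-1)<v/2$), leaving the needed slack. Since we are given a genuine cyclic $(v,k,\lambda)$-BIBD, the arithmetic conditions for existence of such designs are irrelevant; only ``$v$ large'' is used, to dominate error terms.

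The crux — and the step I expect to fight hardest with — is the re-randomisation. The slack $k\ge 2\lambda+1$ only gives that the chosen blocks fill a little less than half of $\Z_v$; meanwhile the set of translates of a single orbit $i$ that are forbidden because $B_i+x_i$ would meet the union $U$ of the other chosen blocks is $U-B_i=\bigcup_{b\in B_i}(U-b)$, a union of $k$ translates of a set of size $<v/2$, which can in the worst case be all of $\Z_v$. So conflicts cannot be repaired one orbit at a time by a naive independent re-sampling; one must re-sample a whole batch of orbits in a coordinated way and bound the conflicts this creates, exploiting that $U$ is not an arbitrary set but a disjoint union of translated base blocks and that $v$ dwarfs every bounded local quantity. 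Carrying this out — pinning down the sparsity and maximum degree of $H$ after the clean-up, and proving the second round succeeds with positive probability — is the real content; the reduction to full orbits, the first-moment computations, and the bookkeeping of the dependency structure are routine.
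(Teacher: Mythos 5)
Your proposal is not a proof: it is an outline whose central step you yourself flag as unresolved, and that step is precisely where the difficulty of the theorem lives. After the first random round the expected number of conflicting pairs is of order $m\lambda$, so a \emph{constant fraction} of the $m\approx\lambda v/(k(k-1))$ orbits are involved in conflicts, not a vanishing one; and, as you correctly observe, for any single orbit $i$ the set of translates forbidden by the union $U$ of the already-fixed blocks is $U-B_i$, which (since $|U|$ is close to $v/2$ when $k=2\lambda+1$) can be all of $\Z_v$. Consequently there need not exist \emph{any} admissible re-selection for a conflicted orbit keeping the others fixed, so the ``second round'' is not a sparser instance of the same problem that a Local Lemma application could finish --- it is essentially the original problem again, and no mechanism is given for the ``coordinated batch re-sampling'' that would be needed. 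Since the single-pass Local Lemma fails by a constant factor (as you compute) and the repair step is unspecified, the argument has a genuine gap exactly at its crux.

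For comparison, the paper does not attempt any local repair or semi-random argument. It proves the stronger Theorem \ref{thm:novak_conj_solution-main} (which implies the stated result, as $2\lambda+1\geq\lambda+2$ for $\lambda\geq 1$) by first handling the $O(1)$ short orbits greedily, exactly as you do, and then encoding the remaining global choice as an $A$-perfect matching problem: one builds a bipartite $(k+1)$-uniform auxiliary hypergraph whose two sides are the base blocks $\mathcal B_2$ of the full orbits and the points of $\Z_v\setminus T$, with an edge $\{B,b_1+a,\dots,b_k+a\}$ for every admissible translate $B+a$ avoiding the short-orbit blocks. Degree and codegree counts give $d_H(B)\geq v-k|T|$, $d_H(x)\leq \lambda(v-1)/(k-1)$, and codegrees at most $\max\{k,\lambda\}$, and the Delcourt--Postle theorem (Theorem \ref{thm:A_perfect_matching}) then produces a $\mathcal B_2$-perfect matching, i.e.\ pairwise disjoint representatives, whenever $v-k|T|$ exceeds $\lambda(v-1)/(k-1)$ by the required margin --- which holds as soon as $k\geq\lambda+2$. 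The slack that your Local Lemma computation lacks is supplied there by a black-box hypergraph matching theorem that only needs a degree ratio slightly above $1$ together with small codegrees; if you want to salvage your plan, replacing the two-round scheme by such a matching (or independent-transversal) theorem is the natural route.
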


\begin{Conjecture}\label{conj:novak_conj_gener}{\rm \cite[Conjecture 3]{FHW2021}}
Let $k$ and $\lambda$ be fixed positive integers such that $k\geq \lambda + 1$. There exists an integer $v_0$ such that, for any cyclic $(v,k,\lambda)$-BIBD with $v\geq v_0$, it is always possible to choose one block from each block orbit so that the chosen blocks are pairwise disjoint.
\end{Conjecture}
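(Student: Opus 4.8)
The plan is to recast the selection problem as a shift-assignment problem and then attack it with a refined probabilistic argument in the slack regime and an absorption argument at the boundary. Fix a cyclic $(v,k,\lambda)$-BIBD and identify its point set with $\Z_v$ via the length-$v$ cycle. Its blocks split into orbits $O_1,\dots,O_m$ under $x\mapsto x+1$; choose a base block $B_i$ from each $O_i$, so that $O_i=\{B_i+g\}$ with $g$ ranging over a full residue system (or a coset, for a short orbit). Choosing one block from $O_i$ amounts to choosing a shift $g_i$, and $B_i+g_i$ and $B_j+g_j$ are disjoint exactly when $g_i-g_j\notin D_{ij}$, where $D_{ij}=\{b'-b:\ b\in B_i,\ b'\in B_j\}$ has size at most $k^2$. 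Thus the goal is an assignment $i\mapsto g_i\in\Z_v$ that, for every pair, avoids a set of at most $k^2$ forbidden differences. Two quantities control the difficulty: the number of orbits is $m\approx\lambda v/(k(k-1))$, so the selected disjoint blocks cover $km\approx\lambda v/(k-1)$ points; and under uniform random shifts the expected number of conflicts incident with a fixed orbit is $\approx mk^2/v\approx\lambda k/(k-1)$. The covering leaves positive slack exactly when $\lambda/(k-1)<1$, i.e. when $k\geq\lambda+2$, and becomes tight --- a near-partition of $\Z_v$ --- precisely when $k=\lambda+1$.

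In the slack regime $k\geq\lambda+2$, first I would dispose of short orbits: for the relevant congruence classes of $v$ their total length is $o(v)$, so I would fix their shifts greedily at the start and delete the few points and orbits they touch. For the remaining full orbits I would run a semirandom (Rödl-nibble) selection: in each round choose shifts for a small random fraction of the still-unselected orbits, uniformly among shifts compatible with all earlier choices, discard the orbits whose admissible shifts all became blocked, and iterate, driving the uncovered orbit set down to $o(v)$ and finishing by alteration on the remainder. The positive slack $1-\lambda/(k-1)>0$ is what keeps a constant fraction of shifts available at each round. The point of this route is that the naive symmetric Lovász Local Lemma, and equally Haxell's independent-transversal bound, are both off by a factor $\approx 2$ here, demanding $k\geq 2\lambda+1$; the gain down to $\lambda+2$ must instead exploit the design structure (any two blocks meet in few points, each point lies in exactly $k$ blocks of each full orbit, and each $D_{ij}$ is spread over all $m$ parts rather than concentrated), encoded through a weighted/asymmetric local lemma or a conflict-free nibble, in the spirit of Theorem~\ref{thm:DF-Banff_DF} but saving the factor $2$ that the negatives requirement costs in the BDF setting.

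The boundary case $k=\lambda+1$ is the main obstacle. Here slack vanishes: since $km\approx v-1$, the $m$ selected disjoint blocks must cover all but a bounded number of points, so a valid selection is essentially a near-tiling of $\Z_v$ by one translate of each orbit, and every slack-driven estimate above degenerates exactly at this threshold (the load $\lambda k/(k-1)$ equals $\lambda+1$ and the covering constant hits $1$). The plan is to replace slack by absorption: reserve in advance a small, flexible family of orbits together with a robust set of shift-adjustments --- an absorber --- able to repair any sufficiently small set of uncovered points or residual collisions; select the bulk of the orbits by the nibble above to cover all but $o(v)$ points disjointly; and then invoke the absorber to complete the selection to a full near-partition. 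The hard part will be constructing such an absorber uniformly over all cyclic $(v,\lambda+1,\lambda)$-BIBDs: at zero slack no orbit can be wasted, and the absorber must be translation-compatible and guaranteed to arise from the mere difference structure of an arbitrary cyclic design, with no assumption on its automorphism group or short-orbit pattern. I expect this absorber construction, rather than the probabilistic bulk step, to be where the remaining difficulty of the conjecture at $k=\lambda+1$ concentrates.
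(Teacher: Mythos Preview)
The statement you are attempting is a \emph{conjecture} in the paper, not a theorem; the paper does not prove it. Theorem~\ref{thm:novak_conj_solution-main} settles only the range $k\geq\lambda+2$, and the boundary case $k=\lambda+1$ is left open.

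For $k\geq\lambda+2$ your plan and the paper's proof coincide in substance. The paper also first handles the short orbits greedily (using Lemma~\ref{lem:number_short_orbits} to bound their number by a constant and Lemma~\ref{lem:forbid_number} to place their translates disjointly), then encodes the remaining full-orbit selection as an $A$-perfect matching problem in an auxiliary bipartite $(k{+}1)$-uniform hypergraph $H$: one part is the set $\B_2$ of full-orbit base blocks, the other is the set $\Z_v\setminus T$ of points not used by the short orbits, and a hyperedge is $\{B\}\cup(B+a)$ for each admissible shift $a$. It then invokes the Delcourt--Postle theorem (Theorem~\ref{thm:A_perfect_matching}) as a black box. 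That theorem is itself a conflict-free nibble result, so your ``semirandom selection'' is the same mechanism unboxed; the paper's route is cleaner because Theorem~\ref{thm:A_perfect_matching} delivers a genuine $A$-perfect matching directly, with no alteration or finishing step. The degree ratio driving the application, roughly $(k-1)/\lambda>1$, is exactly your slack condition $1-\lambda/(k-1)>0$, and the factor-of-two saving over the earlier $k\geq 2\lambda+1$ bound that you attribute to a refined nibble is what Theorem~\ref{thm:A_perfect_matching} already packages.

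For $k=\lambda+1$ your proposal is not a proof but a programme, and you say as much: the absorber is not constructed, only posited, and you correctly identify its uniform construction over \emph{all} cyclic $(v,\lambda+1,\lambda)$-BIBDs as the unresolved crux. Since the paper does not prove this case either, there is nothing to compare against---but neither the paper nor your proposal should be read as settling the full conjecture. The zero-slack obstruction you describe is precisely why Theorem~\ref{thm:A_perfect_matching} (and any nibble argument) stalls at this threshold: the $A$-side and $B$-side degrees of $H$ become asymptotically equal, and the hypothesis $(1+D^{-\alpha})D$ versus $D$ cannot be met.
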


In Section \ref{sec:Nov} we shall prove the following theorem that confirms the above conjecture for any $k\geq \lambda+2$ by using a similar technique to those for the proofs of Theorems \ref{thm:nested_BIBD} and  \ref{thm:DF-Banff_DF}.

\begin{Theorem}\label{thm:novak_conj_solution-main}
Let $k$ and $\lambda$ be fixed positive integers such that $k\geq \lambda + 2$. There exists an integer $v_0$ such that, for any cyclic $(v,k,\lambda)$-BIBD with $v\geq v_0$, it is always possible to choose one block from each block orbit so that the chosen blocks are pairwise disjoint.
\end{Theorem}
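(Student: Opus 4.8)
The plan is to mimic the strategy behind Theorems~\ref{thm:nested_BIBD} and~\ref{thm:DF-Banff_DF}, reformulating the problem as finding a rainbow/independent transversal in an auxiliary conflict structure and then invoking a Lov\'asz-Local-Lemma (LLL) or Kahn-type absorption argument. Let $(X,\mathcal B)$ be a cyclic $(v,k,\lambda)$-BIBD with $X=\Z_v$, and let $\mathcal O_1,\dots,\mathcal O_t$ be its block orbits under $\Z_v$. Choosing one block from each orbit is the same as choosing, for each $i$, a translate $B_i+g_i$ of a fixed representative $B_i$; "pairwise disjoint" means that for all $i\neq j$ and all relevant shifts the chosen translates avoid one another, and also (when an orbit has short length, i.e. a nontrivial stabilizer) the single chosen block is internally consistent. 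So I would set this up as a random experiment: pick $g_i$ uniformly and independently from $\Z_v$ (or from a transversal of the stabilizer of $\mathcal O_i$), and for each ``bad event'' $A_{i,j,d}$ that $(B_i+g_i)\cap(B_j+g_j)\ni$ a point forced by a particular difference $d$, bound $\Pr[A_{i,j,d}]$ and the dependency degree.

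The key numerical steps are the standard ones for difference-family LLL arguments. First, $\Pr[(B_i+g_i)\cap(B_j+g_j)\neq\emptyset]\le k^2/v$ for a single pair because each of the $\le k^2$ candidate coincidences $b+g_i=b'+g_j$ fixes $g_i-g_j$. Second, the dependency: the event for pair $(i,j)$ shares randomness only with pairs involving orbit $i$ or orbit $j$. The number of orbits $t$ is $\Theta(v)$ (precisely $t = \lambda(v-1)/(k(k-1))$ up to short-orbit corrections), so the total number of pairs is $\Theta(v^2)$ and each event is dependent on $\Theta(v)$ others; the LLL product $e\cdot p\cdot(D+1)$ is then $\Theta(e\cdot (k^2/v)\cdot v)=\Theta(k^2)$, which does \emph{not} go to zero. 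This is exactly why a naive LLL fails and why the hypothesis must be used more cleverly: one has to exploit that a point lies in only $\lambda$ blocks total, so the "effective" conflict graph is far sparser than the complete graph on orbits --- indeed most pairs of orbits never collide at all once the shifts are generic, because a fixed point is covered $\lambda=O(1)$ times. I would therefore partition the potential conflicts by point and by difference, and for each fixed point $x$ bound the number of orbit-pairs that could ever place a block on $x$; summing over $x\in\Z_v$ gives only $O(\lambda^2 v)$ genuinely constrained pairs rather than $\Theta(v^2)$, restoring an LLL (or, more robustly, an iterated-absorption / semi-random "nibble"-style) computation in which the relevant parameter is a function of $k$ and $\lambda$ alone and the slack comes from $k\ge\lambda+2$.

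The role of the hypothesis $k\ge\lambda+2$ (rather than merely $k\ge\lambda+1$, the full conjecture) is the crux, and mirrors the gap between Theorem~\ref{thm:novak_conj_solution} ($k\ge2\lambda+1$) and what is now being proved. When $k=\lambda+1$ the block set is so dense --- each pair of points in $\lambda$ blocks, blocks of size one more than $\lambda$ --- that on average a point is covered nearly as many times as there are "slots", the collision probability $k^2/v$ times the number of truly-interacting pairs per orbit no longer has enough room, and the second-moment / LLL slack vanishes; $k\ge\lambda+2$ is precisely the threshold at which one extra point per block, combined with $v\to\infty$, buys a constant-factor deficiency that the probabilistic argument can absorb. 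Concretely I expect the proof to reduce, via the counting above, to verifying an inequality of the shape $f(k,\lambda)<1$ (or $<$ some absorbable constant) that holds exactly when $k-\lambda\ge2$.

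\textbf{Main obstacle.} The hard part will be handling the short orbits (blocks with nontrivial $\Z_v$-stabilizer) and, more seriously, controlling the dependency structure tightly enough: a crude union bound over all orbit pairs is off by a factor of $v$, so the argument genuinely needs the "each point in only $\lambda$ blocks" sparsity to be fed into either a weighted LLL (Pegden-type, or the cluster-expansion LLL) or a Kahn-style random-greedy absorption. Making that bookkeeping precise --- in particular showing that after a random choice of shifts the number of conflicting pairs concentrates around its $O(\lambda^2 v)$ mean and that the residual conflicts can be fixed by local recoloring without creating new ones --- is where the real work lies, and it is also the step most sensitive to the exact relationship between $k$ and $\lambda$, so I would expect the $k=\lambda+1$ case to remain open precisely because this concentration-plus-absorption breaks down there.
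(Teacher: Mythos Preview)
Your proposal has a genuine gap, and the approach diverges from the paper's in a way that matters.

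First, a factual error that undermines your LLL refinement. You write that ``a point lies in only $\lambda$ blocks total'' and that ``a fixed point is covered $\lambda=O(1)$ times.'' This is false: in a $(v,k,\lambda)$-BIBD every point has replication number $r=\lambda(v-1)/(k-1)=\Theta(v)$. It is each \emph{pair} of points that lies in $\lambda$ blocks. Consequently your claim that only $O(\lambda^2 v)$ orbit-pairs are ``genuinely constrained'' is wrong: for any two base blocks $B_i,B_j$ there are up to $k^2$ values of $g_i-g_j$ causing a collision, so \emph{every} pair of orbits can collide, and the number of constrained pairs is $\Theta(v^2)$. Your proposed refinement of the dependency structure therefore does not rescue the LLL computation, and the sketch stalls exactly where you yourself flagged the difficulty.

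Second, you say you will ``mimic the strategy behind Theorems~\ref{thm:nested_BIBD} and~\ref{thm:DF-Banff_DF},'' but you then switch to LLL/absorption, which is not what those proofs do. The paper's actual strategy---used verbatim in all three theorems---is to build an auxiliary bipartite $(k+1)$-uniform hypergraph and apply the Delcourt--Postle $A$-perfect matching theorem (Theorem~\ref{thm:A_perfect_matching}). Concretely: handle the (boundedly many) short orbits greedily, let $T$ be the points they use, then for the full-orbit base blocks $\mathcal B_2$ set $V(H)=\mathcal B_2\cup(\Z_v\setminus T)$ and put in a hyperedge $\{B\}\cup(B+a)$ for every shift $a$ with $(B+a)\cap T=\emptyset$. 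A $\mathcal B_2$-perfect matching of $H$ is exactly a disjoint system of translates. The degree conditions are $d_H(B)\ge v-k|T|\sim v$ on the $\mathcal B_2$ side and $d_H(x)\le \lambda(v-1)/(k-1)$ on the point side; the codegree is $O(1)$ because two distinct points lie together in at most $\lambda$ blocks. The hypothesis $k\ge\lambda+2$ enters as the clean inequality $\tfrac{2k-3}{2\lambda}\ge 1+\tfrac{1}{2\lambda}$, which makes the $\mathcal B_2$-degrees exceed $(1+D^{-\alpha})D$ for $D=\tfrac{2\lambda}{2k-3}(v-k|T|)$, so Theorem~\ref{thm:A_perfect_matching} applies. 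No LLL, no absorption, no second-moment concentration is needed; the $\lambda$ appears in the \emph{codegree} bound (pairs of points), not in a point-replication bound.
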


\section{Nested BIBDs}

We open this section with a result about the existence of an $A$-perfect matching in a bipartite hypergraph.

A {\em $($multi$)$-hypergraph} consists of a pair $(V,E)$ where $V$ is a set whose elements are called {\em vertices} and $E$ is a (multi)-set of subsets of $V$ called {\em edges}. For an integer $r\geq 1$, a $($multi$)$-hypergraph is {\em $r$-uniform} if every edge has size exactly $r$. The {\em degree} of a vertex $v$ of a multi-hypergraph $G$, denoted by $d_G(v)$, is the number of edges of $G$ containing $v$ and the {\em codegree} of two distinct vertices $u$ and $v$ of $G$ is the number of edges of $G$ containing both $u$ and $v$. A $($multi$)$-hypergraph $G = (A\cup B,E(G))$ is {\em bipartite with parts $A$ and $B$} if every edge of $G$ contains exactly one vertex from $A$. A {\em matching} of a  $($multi$)$-hypergraph $G$ is a set $M\subseteq E(G)$ such that $e_1\cap e_2=\emptyset$ for any $e_1,e_2\in M$. For a $($multi$)$-hypergraph $G$ and $A\subseteq V(G)$, we say a matching of $G$ is {\em $A$-perfect} if every vertex of $A$ is in an edge of the matching.

Delcourt and Postle \cite{DP2022} proved the following theorem.

\begin{Theorem}\label{thm:A_perfect_matching}{\rm \cite[Theorem 2.6]{DP2022}}
For any integer $r\geq 2$ and real $\beta> 0$, there exist an integer $D_{r,\beta}\geq 0$ and real $\alpha> 0$ such that the following holds for all $D\geq D_{r,\beta}$.
If $G =(A\cup B,E(G))$ is a bipartite $r$-uniform (multi)-hypergraph satisfying
\begin{enumerate}
 \item[$(1)$] every vertex in $A$ has degree at least $(1+D^{-\alpha})D$ and every vertex in $B$ has degree at most $D$, and
 \item[$(2)$] every pair of vertices of $G$ has codegree at most $D^{1-\beta}$,
\end{enumerate}
then there exists an $A$-perfect matching of $G$.
\end{Theorem}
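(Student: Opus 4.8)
Theorem \ref{thm:A_perfect_matching} is a one-sided perfect hypergraph matching statement, and I would establish it with the semi-random (R\"odl--Pippenger--Spencer) nibble reinforced by an absorption argument. The structural point to exploit is that, since $G$ is bipartite and $r$-uniform with every edge meeting $A$ in exactly one vertex, each edge has the form $\{a\}\cup S$ with $a\in A$ and $S$ an $(r-1)$-subset of $B$; thus an $A$-perfect matching is a choice of one edge at every $a\in A$ whose $B$-parts are pairwise disjoint. The surplus degree $(1+D^{-\alpha})D$ on the $A$-side --- i.e.\ the extra $D^{1-\alpha}$ incident edges per vertex of $A$ --- is precisely the slack that will let us cover \emph{all} of $A$ rather than a mere $(1-o(1))$-fraction, and the codegree bound (2) supplies the quasirandomness required to run concentration estimates. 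We are free to pick $\alpha>0$ small, which is what makes the competing constraints below compatible.

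The plan is as follows. First, before running the nibble, I would reserve a sparse random sub-hypergraph $R$ and verify, by a Chernoff/first-moment computation, that every $a\in A$ still sees order $D^{1-\alpha}$ reserved incident edges, while (2) keeps these reserved edges nearly ``independent'' in the sense that few of them share $B$-vertices. This furnishes, for each vertex of $A$, a plentiful supply of disjoint absorbing edges held in reserve. Second, on $G\setminus R$ I would run the standard iterative nibble: in each round activate each surviving edge independently with probability a small constant times $D^{-1}$, keep an activated edge exactly when it is disjoint from every other activated edge, and delete all edges meeting the newly matched $B$-vertices. Using Azuma-type or Talagrand-type concentration together with the codegree hypothesis to bound variances, I would track the degrees of the still-uncovered vertices of $A$ and of the vertices of $B$, showing they follow their expected trajectories up to a $(1\pm o(1))$ factor; after $O(\log D)$ rounds the set $U\subseteq A$ of uncovered vertices satisfies $|U|\leq |A|\,D^{-\gamma}$ for some $\gamma>0$, and the residual hypergraph is sparse.

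Finally, I would absorb $U$ using the gadgets reserved in the first step: because $U$ is sparse and the reserved absorbers attached to distinct vertices of $A$ were chosen to be essentially disjoint, a greedy selection (equivalently, a Hall-type matching argument, or one last miniature nibble) assigns to each leftover vertex a reserved edge with all $B$-parts disjoint, completing the $A$-perfect matching.

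The main obstacle is the interaction between the reservation step and the final absorption: one must guarantee simultaneously that the reserved edges are numerous enough to cover \emph{every} leftover vertex of $U$ and disjoint enough that these last assignments do not collide on $B$. This forces a quantitative balance among the surplus exponent $\alpha$, the leftover exponent $\gamma$ produced by the nibble, and the codegree exponent $\beta$; choosing $\alpha$ sufficiently small relative to $\beta$ and $\gamma$ --- and $D_{r,\beta}$ correspondingly large --- is exactly what renders all three demands consistent and yields the advertised $A$-perfect matching.
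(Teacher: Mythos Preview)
The paper does not prove Theorem~\ref{thm:A_perfect_matching} at all: it is quoted verbatim from Delcourt and Postle \cite[Theorem~2.6]{DP2022} and used as a black box throughout Sections~\ref{sec:Thm1}, \ref{sec:banff DF} and \ref{sec:Nov}. There is therefore no ``paper's own proof'' to compare your attempt against.

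As for the substance of your sketch, the high-level architecture (iterated R\"odl nibble to cover a $(1-o(1))$-fraction of $A$, followed by an absorption/clean-up step exploiting the $D^{1-\alpha}$ degree surplus and the codegree bound) is indeed the standard route to such one-sided perfect matching theorems, and is in the spirit of what Delcourt and Postle do. That said, what you have written is a plan rather than a proof: the absorption step in particular is under-specified. Simply reserving random edges and then ``greedily'' assigning them to leftover vertices does not by itself work, because you must show that the auxiliary assignment problem (matching $U$ into the reserved edges with disjoint $B$-parts) satisfies a Hall-type condition, and this requires a careful expansion argument or a genuine absorber construction, not just a counting of reserved edges per vertex. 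Your final paragraph acknowledges this tension but does not resolve it. If you intend to supply an independent proof, that is where the real work lies; otherwise, citing \cite{DP2022} as the paper does is the appropriate course.
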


\subsection{The asymptotic existence of Banff $(v,k,\lambda)$-BIBDs}\label{sec:Thm1}

We recall that a $(v,k,\lambda)$-BIBD is a Banff design if and only if it has a nesting.

\begin{proof}[{\bf Proof of Theorem \ref{thm:nested_BIBD}}]
Let $(X,\B)$ be a $(v,k,\lambda)$-BIBD and $\binom{X}{2}$ be the family of all $2$-subsets of $X$. We construct an auxiliary bipartite $(k+1)$-uniform (multi)-hypergraph $G=(V(G),E(G))$, where the vertex set $V(G)=\B\cup\binom{X}{2}$ are partitioned into two parts $\B$ and $\binom{X}{2}$, and the (multi)-set of edges is $E(G)=\{\{B,\{a,b_1\},\{a,b_2\},\dots,\{a,b_k\}\} \mid B=\{b_1,b_2,\dots,b_k\}\in \B, a\in X\setminus B\}$.

If there exists a $\B$-perfect matching $M$ of $G$, then the $(v,k,\lambda)$-BIBD $(X,\B)$ has a nesting. Indeed, for any $B=\{b_1,\dots,b_k\}\in\B$, there is a hyperedge $e_{B}=\{B,\{a_B,b_1\},\dots,\{a_B,b_k\}\}\in M$ for some $a_B\in X\setminus B$. Let $\phi(B)=a_B$. Since $M$ is a $\B$-perfect matching of $G$, $e_B\setminus\{B\}$, $B\in\B$, are mutually disjoint. Hence, $(X,\{B\cup\{\phi(B)\}\mid B\in\mathcal B\})$ is a $(v,k+1,\lambda+1)$-packing.

For any $B\in \B$, $d_{G}(B)=v-k$. Since $(X,\B)$ is a $(v,k,\lambda)$-BIBD, the number of blocks in $\B$ containing any given point is $\frac{\lambda(v-1)}{k-1}$. For any pair $\{a,b\}\in \binom{X}{2}$, the number of blocks in $\B$ containing $a$ but not containing $b$ is $\frac{\lambda(v-1)}{k-1}-\lambda$. Thus $d_G(\{a,b\})=2(\frac{\lambda(v-1)}{k-1}-\lambda)=2\frac{\lambda(v-k)}{k-1}$.
For any $B\in\B$ and $\{a,b\}\in\binom{X}{2}$, the codegree of $B$ and $\{a,b\}$ in $G$ is at most $1$.
For all distinct $\{a_1,b_1\}$ and $\{a_2,b_2\}$, if $\{a_1,b_1\}\cap\{a_2,b_2\}=\emptyset$, then the codegree of $\{a_1,b_1\}$ and $\{a_2,b_2\}$ in $G$ is $0$; if $\{a_1,b_1\}\cap\{a_2,b_2\}=\{a_1\}=\{a_2\}$, then by the definition of a $(v,k,\lambda)$-BIBD, there exist $\lambda$ blocks containing $\{b_1,b_2\}$, and so the codegree of $\{a_1,b_1\}$ and $\{a_2,b_2\}$ in $G$ is at most $\lambda$.

For a sufficiently large $v$, let $D=\frac{4\lambda(v-k)}{4\lambda+1}$. Given any real $\alpha>0$, we have $$d_{G}(B)=v-k=(1+\frac{1}{4\lambda})D\geq(1+D^{-\alpha})D$$
for $B\in \B$, and since $k\geq 2\lambda+2$,
$$d_G(\{a,b\})=2\frac{\lambda(v-k)}{k-1}\leq \frac{4\lambda(v-k)}{4\lambda+1}= D$$ for $\{a,b\}\in \binom{X}{2}$.
Given any real $\beta>0$, the codegree of $G$ is at most $\lambda\leq D^{1-\beta}$. Then applying Theorem \ref{thm:A_perfect_matching}, we have a $\B$-perfect matching $M$ of $G$, so the $(v,k,\lambda)$-BIBD $(X,\B)$ has a nesting.
\end{proof}

\subsection{The asymptotic existence of $(k_2,\lambda_2;k_1, \lambda_1)$-nestings}\label{sec:Thm2}

To establish the asymptotic existence of $(k_2,\lambda_2;k_1,\lambda_1)$-nestings, we need an asymptotic existence theorem from \cite{lw} on decompositions of edge-$r$-colored complete digraphs into prescribed edge-$r$-colored subgraphs. Here, {\em edge-$r$-colored} means that each edge has a color chosen from a set of $r$ colors.

Let $K_{n}^{[\lambda_{1},\lambda_{2},\ldots,\lambda_{r}]}$ denote the complete (multi)-digraph on $n$ vertices where there are exactly $\lambda_{i}$ edges of color $i$ joining $x$ to $y$ for every ordered pair $(x,y)$ of distinct vertices and every $1\leq i\leq r$. The digraph $K_{n}^{[\lambda_{1},\lambda_{2},\ldots,\lambda_{r}]}$ has $n(n-1)\sum_{i=1}^r \lambda_i$ edges.

A family $\cal S$ of subgraphs of a graph $\Omega$ is called a {\em decomposition} of $\Omega$ if every edge of $\Omega$ belongs to exactly one member of $\cal S$. 
Given a family $\cal G$ of edge-$r$-colored digraphs, a {\em $\cal G$-decomposition} of a graph $\Omega$ is a decomposition $\cal S$ of $\Omega$ such that every graph $S\in \cal S$ is isomorphic to some graph $G\in \cal G$. Here, we require that isomorphisms between edge-$r$-colored digraphs preserve the colors of edges.

For a vertex $x$ of an edge-$r$-colored digraph $G$, the {\em degree-vector} of $x$ is the $2r$-vector
\begin{equation*}
 \mathbf{d}_G(x) = (in_{1}(x), out_{1}(x), in_{2}(x), out_{2}(x),\ldots, in_{r}(x), out_{r}(x)),
\end{equation*}
where $in_{i}(x)$ and $out_{i}(x)$ denote, respectively, the in-degree and out-degree of vertex $x$ in the subgraph of $G$ spanned by the edges of color $i$, $1 \leq i \leq r$.

Let $\mathbf{r}=(\lambda_{1},\lambda_{2},\ldots,\lambda_{r})$ be a vector of $r$ positive integers and $\bar{\mathbf{r}}= (\lambda_{1},\lambda_{1},\lambda_{2},\lambda_{2},\ldots,$ $\lambda_{r},\lambda_{r})$ be a $2r$-vector. Let $\cal G$ be a family of edge-$r$-colored digraphs. Define $\alpha(\cal G, \mathbf{r})$ to be the greatest common divisor of the integers $t$ such that $t\bar{\mathbf{r}}$ is an integral linear combination of $\mathbf{d}_G(x)$ as $x$ runs over all vertices of all graphs in $\cal G$. Equivalently, $\alpha(\cal G, \mathbf{r})$ is the least positive integer $t_0$ such that $t_0\bar{\mathbf{r}}$ is an integral linear combination of $\mathbf{d}_G(x)$ as $x$ runs over all vertices of all graphs in $\cal G$.

For each $G\in {\cal G}$, let $\mu(G)=(w_{1},w_{2},\ldots,w_{r})$, where $w_{i}$ is the number of edges of color $i$ in $G$. Let $\beta({\cal G},\mathbf{r})$ be the greatest common divisor of the integers $m$ such that $m\mathbf{r}$ is an integral linear combination of $\mu(G)$, $G \in \cal G$. Equivalently, $\beta({\cal G},\mathbf{r})$ is the least positive integer $m$ such that $m\mathbf{r}$ is an integral linear combination of $\mu(G)$, $G \in \cal G$. ${\cal G}$ is called {\em $\mathbf{r}$-admissible} when the vector $\mathbf{r}$ is a positive rational linear combination of $\mu(G)$, $G \in \cal G$.

An edge-$r$-colored digraph is called {\em simple}, if it has no loops and there is at most one edge directed from $x$ to $y$ for each order pair $(x, y)$ of distinct vertices.

Now we state the powerful theorem due to Lamken and Wilson \cite {lw}. It has been employed to construct many combinatorial configurations (cf. \cite{Lamken09,Lamken15,WCF19}).

\begin{Theorem}{\rm \cite[Theorem 13.1]{lw}}\label{thm:decompositions_edge_colored_diKn}
Let $\cal G$ be an $\mathbf{r}$-admissible family of simple edge-$r$-colored digraphs,
where $\mathbf{r} = (\lambda_{1},\lambda_{2},\ldots,\lambda_{r})$. Then there exists a constant $n_{0} = n_{0}({\cal G},\mathbf{r})$ such that $\cal G$-decompositions of
$K_{n}^{[\lambda_{1},\lambda_{2},\ldots,\lambda_{r}]}$ exist for all $n\geq n_{0}$ satisfying $n - 1 \equiv 0 \pmod {\alpha({\cal G},\mathbf{r})}$ and $n(n-1) \equiv 0 \pmod {\beta({\cal G},\mathbf{r})}$.
\end{Theorem}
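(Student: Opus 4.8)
The plan is to adapt R.~M.~Wilson's asymptotic existence paradigm for block designs to the edge-colored directed setting: isolate the arithmetic obstructions (which turn out to be exactly $\alpha(\mathcal{G},\mathbf{r})$ and $\beta(\mathcal{G},\mathbf{r})$), build a finite supply of ``ingredient'' decompositions by algebraic means, and then propagate them to all large admissible orders through a colored-directed version of Wilson's Fundamental Construction together with a number-theoretic closure argument. As a first step I would verify that $\alpha$ and $\beta$ capture precisely the local necessary conditions. Every vertex $x$ of $K_n^{[\lambda_1,\ldots,\lambda_r]}$ has degree-vector $(n-1)\bar{\mathbf{r}}$, and in any $\mathcal{G}$-decomposition this vector is the sum of the degree-vectors $\mathbf{d}_G(y)$ contributed by the copies of members of $\mathcal{G}$ through $x$; hence $(n-1)\bar{\mathbf{r}}$ lies in the integer lattice spanned by the $\mathbf{d}_G(y)$, forcing $\alpha \mid (n-1)$. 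Counting arcs of each color gives $n(n-1)\mathbf{r}$ as a nonnegative integer combination of the vectors $\mu(G)$, forcing $\beta \mid n(n-1)$, while $\mathbf{r}$-admissibility guarantees that the corresponding rational ``edge-balance'' system has a genuine nonnegative solution. These are exactly the two congruences in the statement.

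The technical heart is a colored-directed theory of group divisible designs. I would define a $\mathcal{G}$-GDD of a given group type to be a $\mathcal{G}$-decomposition of the colored digraph obtained from the complete colored digraph on the point set by deleting all arcs lying within a single group, and then establish a Fundamental Construction for such objects: starting from a ``master'' GDD, inflating each point by a common weight, and placing ingredient $\mathcal{G}$-decompositions or $\mathcal{G}$-GDDs on the inflated blocks, the pieces glue into a larger $\mathcal{G}$-GDD. The delicate point is that every arc of the composite object must inherit a well-defined color and orientation, and that the degree-vector bookkeeping (now a $2r$-vector at each vertex) and the arc-count bookkeeping (an $r$-vector) balance simultaneously across all blocks. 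This is the place where the $r$ colors and the directedness genuinely complicate the classical uncolored argument.

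Next I would secure a dense, arithmetically rich supply of ingredients. Transversal designs $\mathrm{TD}(k,m)$ exist for all sufficiently large $m$, and by orienting and coloring their blocks using the cyclic or finite-field structure underlying them one manufactures ingredient $\mathcal{G}$-decompositions and $\mathcal{G}$-GDDs for an infinite family of orders. Feeding these seeds into the Fundamental Construction shows that the set $S$ of orders $n$ for which a $\mathcal{G}$-decomposition of $K_n^{[\lambda_1,\ldots,\lambda_r]}$ exists is closed under the relevant composition operations and contains enough starting values.

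Finally I would close the argument number-theoretically. The closure properties of $S$, combined with the available seed orders, imply that $S$ eventually contains every integer satisfying $n-1\equiv 0 \pmod{\alpha(\mathcal{G},\mathbf{r})}$ and $n(n-1)\equiv 0 \pmod{\beta(\mathcal{G},\mathbf{r})}$; this is the analog of Wilson's theorem that a PBD-closed set contains all large integers meeting the necessary divisibility conditions, and it produces the threshold $n_0$. I expect the main obstacle to be the second step: extending the Fundamental Construction so that it preserves edge colors, arc orientations, and both arithmetic invariants at every vertex at once. Once this colored-directed GDD machinery is in place, the ingredient supply and the number-theoretic closure follow the classical template.
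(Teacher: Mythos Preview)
This theorem is not proved in the present paper at all: it is quoted verbatim from Lamken and Wilson \cite[Theorem~13.1]{lw} and used as a black box in the proof of Theorem~\ref{thm:(k_1__k_2)_nested}. So there is no ``paper's own proof'' to compare your proposal against. Your outline is a fair high-level description of the strategy in \cite{lw} (Wilson's asymptotic paradigm upgraded to edge-colored digraphs, with the $\alpha$ and $\beta$ invariants encoding the local necessary conditions), but for the purposes of this paper you should simply cite the result rather than reprove it.
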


\begin{proof}[{\bf Proof of Theorem \ref{thm:(k_1__k_2)_nested}}]
Suppose that $U_2$ is a set of $k_2$ vertices and $U_1\subseteq U_2$ is a $k_1$-subset. Let $G$ be an edge-$2$-colored complete digraph with vertex set $U_2$, where the edges within $U_1$ are colored with color $1$, and the edges within $U_2\setminus U_1$ and between $U_2\setminus U_1$ and $U_1$ are colored with color $2$.

Let $\mathcal G=\{G\}$. We claim that if there is a $\cal G$-decomposition $\mathcal S$ of $K_{v}^{[\lambda_1,\lambda_2-\lambda_1]}$, then there exists a $(k_2,\lambda_2;k_1, \lambda_1)$-nesting.
Indeed, $(X,\{V(S)\mid S\in\mathcal S\})$ is a $(v,k_2,\lambda_2)$-BIBD, where $X$ is the vertex set of $K_{v}^{[\lambda_1,\lambda_2-\lambda_1]}$ and $V(S)$ is the vertex set of $S$ in $\mathcal S$. For each $S\in\mathcal S$, let $S'$ be the subgraph of $S$ obtained by keeping only the directed edges with colour $1$. Note that $\{S'\mid S\in\mathcal S\}$ is a $\{K_{k_1}^{[1]}\}$-decomposition of $K_{v}^{[\lambda_1]}$, and so $(X,\{V(S')\mid S\in\mathcal S\})$ is a $(v,k_1,\lambda_1)$-BIBD. Thus we obtain a $(k_2,\lambda_2;k_1, \lambda_1)$-nesting.


By the definition of $G$, $\mathbf{d}_G(u_1)=(k_1-1,k_1-1,k_2-k_1,k_2-k_1)$ for each $u_1\in U_1$ and $\mathbf{d}_G(u_2)=(0,0,k_2-1,k_2-1)$ for each $u_2\in U_2\setminus U_1$. Note that $\mathbf{r}=(\lambda_1,\lambda_2-\lambda_1)$ and $\bar{\mathbf{r}}=(\lambda_1,\lambda_1,\lambda_2-\lambda_1,\lambda_2-\lambda_1)$. If $t(\lambda_1,\lambda_1,\lambda_2-\lambda_1,\lambda_2-\lambda_1)=\alpha_1(k_1-1,k_1-1,k_2-k_1,k_2-k_1)
+\alpha_2(0,0,k_2-1,k_2-1)$, where $t$ is a positive integer, and $\alpha_1$ and $\alpha_2$ are integers, then
$$t\lambda_1=\alpha_1(k_1-1) \text{~~and~~} t(\lambda_2-\lambda_1)=\alpha_1(k_2-k_1)+\alpha_2(k_2-1),$$
which implies
$$t\lambda_1=\alpha_1(k_1-1) \text{~~and~~} t\lambda_2=(\alpha_1+\alpha_2)(k_2-1).$$
Thus
$$\alpha_1=\frac{t\lambda_1}{k_1-1} \text{~~and~~} \alpha_2=\frac{t\lambda_2}{k_2-1}-\frac{t\lambda_1}{k_1-1}.$$
Let
$$I_1=\left\{s\in\mathbb{Z}^+\mid \frac{s\lambda_1}{k_1-1}\in\mathbb{Z}\text{~and~}\frac{s\lambda_2}{k_2-1}\in\mathbb{Z}\right\}.$$
Then $t\in I_1$ and $\alpha(\mathcal G, \mathbf{r})$ is the smallest positive integer in $I_1$.
Since $\lambda_1 (v-1)\equiv 0\pmod{k_1-1}$ and $\lambda_2 (v-1)\equiv 0\pmod{k_2-1}$, $\frac{\lambda_1(v-1)}{k_1-1}$ and $\frac{\lambda_2(v-1)}{k_2-1}$ are both integers and so $v-1\in I_1$. If $v-1\not\equiv 0\pmod{\alpha(\mathcal G, \mathbf{r})}$, then there exist integers $q$ and $r$ such that $v-1=q\cdot\alpha(\mathcal G, \mathbf{r})+r$ where $0<r<\alpha(\mathcal G, \mathbf{r})$. Since $v-1\in I_1$ and $\alpha(\mathcal G, \mathbf{r})\in I_1$, we have $r\in I_1$. This yields a contradiction since $\alpha(\mathcal G, \mathbf{r})$ is the smallest positive integer in $I_1$. So $v-1\equiv 0\pmod{\alpha(\mathcal G, \mathbf{r})}$.

By the definition of $G$, $\mu(G)=(k_1(k_1-1),k_2(k_2-1)-k_1(k_1-1))$. If $m(\lambda_1,\lambda_2-\lambda_1)=\beta(k_1(k_1-1),k_2(k_2-1)-k_1(k_1-1))$, where $m$ is a positive integer and $\beta$ is an integer, then
$$m\lambda_1=\beta k_1(k_1-1)\text{~~and~~}m\lambda_2=\beta k_2(k_2-1).$$
By the assumption $\lambda_1k_2(k_2-1)=\lambda_2k_1(k_1-1)$, the first equation $m\lambda_1=\beta k_1(k_1-1)$ above implies the second one $m\lambda_2=\beta k_2(k_2-1)$. Let
$$I_2=\left\{\frac{\gamma k_1(k_1-1)}{\lambda_1}\in\mathbb{Z}^+\mid \gamma\in\mathbb{Z}\right\}.$$
Then $m\in I_2$ and $\beta(\mathcal G, \mathbf{r})$ is the smallest positive integer in $I_2$.
By $\lambda_1 v(v-1)\equiv 0\pmod{k_1(k_1-1)}$, there is an integer $\gamma$ such that $\lambda_1 v(v-1)=\gamma k_1(k_1-1)$, and hence $v(v-1)\in I_2$. If $v(v-1)\not\equiv 0\pmod{\beta(\mathcal G, \mathbf{r})}$, then there exist integers $q$ and $r$ such that $v(v-1)=q\cdot\beta(\mathcal G, \mathbf{r})+r$ where $0<r<\beta(\mathcal G, \mathbf{r})$. Since $v(v-1)\in I_2$ and $\beta(\mathcal G, \mathbf{r})\in I_2$, we have $r\in I_2$. A contradiction occurs by the definition of $\beta(\mathcal G, \mathbf{r})$. Thus $v(v-1)\equiv 0\pmod{\beta(\mathcal G, \mathbf{r})}.$

By the assumption $\lambda_1k_2(k_2-1)=\lambda_2k_1(k_1-1)$, we have $$(\lambda_1,\lambda_2-\lambda_1)=\frac{\lambda_1}{k_1(k_1-1)}(k_1(k_1-1),k_2(k_2-1)-k_1(k_1-1)),$$
so $\cal G$ is an $\mathbf{r}$-admissible family consisting of only one simple edge-$2$-colored digraph.

Now we can apply Theorem \ref{thm:decompositions_edge_colored_diKn} to obtain a $\cal G$-decomposition of $K_{v}^{[\lambda_1,\lambda_2-\lambda_1]}$, which gives a $(k_2,\lambda_2;k_1,\lambda_1)$-nesting.
\end{proof}

\section{Banff difference families}\label{sec:banff DF}

The following lemma will be used to prove Theorem \ref{thm:DF-Banff_DF}. Let $G$ be a finite group. We recall that for any $a\in G$ and $B=\{b_1,\dots,b_k\}\subseteq G$, let $-B=\{-b_1,\dots,-b_k\}$ and $B+a=\{b_1+a,\dots,b_k+a\}$.

\begin{Lemma}\label{lem:solution_number}
Let $C$ be a constant and $G$ be a finite abelian group such that the number of $2$-order elements in $G$ is at most $C$. Let $k\geq 1$. If $B$ is a $k$-subset of $G$, then
$$|\{a\in G\mid -(B+a)\cap (B+a)\neq\emptyset\}|\leq2^Ck^2.$$
\end{Lemma}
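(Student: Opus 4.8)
The plan is to reduce everything to counting solutions of a single linear equation in $G$. Writing $B=\{b_1,\dots,b_k\}$, I would first observe that an element $a\in G$ satisfies $-(B+a)\cap(B+a)\neq\emptyset$ if and only if $-(b_i+a)=b_j+a$ for some indices $i,j\in\{1,\dots,k\}$, i.e.\ if and only if $2a=-(b_i+b_j)$ for some such $i,j$. Hence the set in question is exactly $\psi^{-1}(S)$, where $\psi\colon G\to G$ is the doubling homomorphism $\psi(a)=2a$ and $S=\{-(b_i+b_j)\mid 1\leq i,j\leq k\}$. Since $S$ is the image of the map $(i,j)\mapsto -(b_i+b_j)$ defined on the $k^2$ pairs $(i,j)$, we have $|S|\leq k^2$.

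Next I would control the fibres of $\psi$. Its kernel is the $2$-torsion subgroup $G[2]=\{t\in G\mid 2t=0\}$, which is an elementary abelian $2$-group, so $|G[2]|=2^m$ for some integer $m\geq 0$; the non-identity elements of $G[2]$ are precisely the elements of order $2$, of which there are $2^m-1$. By hypothesis $2^m-1\leq C$, and since $m\leq 2^m-1$ whenever $m\geq 1$ (and trivially $m=0\leq C$ otherwise), we obtain $m\leq C$ and therefore $|G[2]|=2^m\leq 2^C$. As $\psi$ is a group homomorphism, each non-empty fibre $\psi^{-1}(c)$ is a coset of $G[2]$ and so has size exactly $|G[2]|\leq 2^C$.

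Combining the two steps, $\psi^{-1}(S)=\bigcup_{c\in S}\psi^{-1}(c)$ has at most $|S|\cdot|G[2]|\leq k^2\cdot 2^C=2^Ck^2$ elements, which is the claimed bound. I do not expect a genuine obstacle here, since the whole argument is a short counting computation; the only point requiring a little care is the passage from ``at most $C$ elements of order $2$'' to the bound $|G[2]|\leq 2^C$, which relies on $G[2]$ being elementary abelian together with the elementary estimate $2^m\geq m+1$.
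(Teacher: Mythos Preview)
Your proposal is correct and follows essentially the same approach as the paper: both reduce to the equation $2a=-(b_i+b_j)$ and bound the number of solutions for each of the $k^2$ pairs by the size of the $2$-torsion subgroup. The paper's proof is much terser, simply asserting $|\{a\in G\mid b_1+b_2+2a=0\}|\leq 2^C$ without justification; your explicit treatment of $G[2]$ as an elementary abelian $2$-group and the estimate $m\leq 2^m-1\leq C$ supplies exactly the detail the paper omits.
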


\begin{proof}
If $-(B+a)\cap (B+a)\neq\emptyset$, then $-(b_1+a)=b_2+a$ for some $b_1,b_2\in B$, and so $b_1+b_2+2a=0$.
Since the number of $2$-order elements in $G$ is at most $C$, $|\{a\in G\mid b_1+b_2+2a=0\}|\leq2^C$ for any given $b_1,b_2\in G$. Therefore, $|\{a\in G\mid -(B+a)\cap (B+a)\neq\emptyset\}|\leq2^Ck^2$.
\end{proof}

\begin{proof}[{\bf Proof of Theorem \ref{thm:DF-Banff_DF}}]
Let $G$ be an abelian group of order $v$, $\B$ be a $(G,k,\lambda)$-DF and $G'=\{\{x,-x\}\mid x\in G\setminus\{0\},-x\neq x\}$. We construct an auxiliary bipartite $(k+1)$-uniform (multi)-hypergraph $H=(V(H),E(H))$, where the vertex set $V(H)=\mathcal B\cup G'$ are partitioned into two parts $\mathcal B$ and $G'$, and the (multi)-set of edges is $E(H)=\{\{B\}\cup \{\{b_1+a,-b_1-a\},\dots,\{b_k+a,-b_k-a\}\} \mid a\in G, B=\{b_1,\dots,b_k\}\in\mathcal B\ \text{and} -(B+a)\cap(B+a)=\emptyset\}$.

For any $e\in E(H)$, there exist $B_e=\{b_1,\dots,b_k\}\in \mathcal B$ and $a_e\in G$ such that $e=\{B_e\}\cup \{\{b_1+a_e,-b_1-a_e\},\dots,\{b_k+a_e,-b_k-a_e\}\}$ and let $S(e)=B_e+a_e$.

If there exists a $\mathcal B$-perfect matching $M$ of $H$, then $\mathcal F=\{S(e)\mid e\in M\}$ is a $(G,k,\lambda)$-BDF. Indeed, for any $B=\{b_1,\dots,b_k\}\in\mathcal B$, there is a hyperedge $e_B=\{B,\{b_1+a_B,-b_1-a_B\},\dots,\{b_k+a_B,-b_k-a_B\}\}\in M$ for some $a_B\in G$. By the definition of hyperedges in $H$, for all $B\in\mathcal B$, $-(B+a_B)\cap(B+a_B)=\emptyset$. Since $M$ is a $\mathcal B$-perfect matching of $H$, $e_B\setminus\{B\}$, $B\in\mathcal B$, are mutually disjoint. That is, for any $e,e'\in M$, $S(e)$, $-S(e)$, $S(e')$ and $-S(e')$ are mutually disjoint. Because $M$ is a $\mathcal B$-perfect matching of $H$, we can find unique one hyperedge $e\in M$ with $B\in e$ for any $B\in\mathcal B$ and so $|\mathcal F|=|\mathcal B|$. Since $\mathcal B$ is a $(G,k,\lambda)$-DF, $\mathcal F$ is a $(G,k,\lambda)$-BDF.

For any $B\in \B$, $d_{H}(B)$ is equal to the number of $a\in G$ such that $-(B+a)\cap(B+a)=\emptyset$. By Lemma \ref{lem:solution_number}, $d_{H}(B)\geq v-2^Ck^2$. For every $\{x,-x\}\in G'$, $d_H(\{x,-x\})$ is at most the number of triples $(B,b,a)$'s satisfying $b\in B\in\mathcal B$, $a\in G$, and $b+a=x$ or $-b-a=x$. Since $\mathcal B$ is a $(G,k,\lambda)$-DF, $|\mathcal B|=\frac{\lambda(v-1)}{k(k-1)}$ and so $d_H(\{x,-x\})\leq\frac{\lambda(v-1)}{k(k-1)}\times 2k=\frac{2\lambda(v-1)}{k-1}$. For every $B\in\mathcal B$ and $\{x,-x\}\in G'$, the codegree of $B$ and $\{x,-x\}$ in $H$ is at most $2k$.
For all distinct $\{x_1,-x_1\}$ and $\{x_2,-x_2\}$ in $G'$, since $\mathcal B$ is a $(G,k,\lambda)$-DF, it yields a $(v,k,\lambda)$-BIBD, and so there exist exactly $\lambda$ pairs $(B,a)$'s with $B\in \B$ and $a\in G$ such that $\{y_1,y_2\}\in B+a$ for every $\{y_1,y_2\}\in\{\{x_1,x_2\},\{x_1,-x_1\},\{x_1,-x_2\},\{-x_1,-x_2\}\}$. Thus the codegree of $\{x_1,-x_1\}$ and $\{x_2,-x_2\}$ in $H$ is at most $4\lambda$.

For a sufficiently large $v$, let $D=\frac{4\lambda}{2k-3}(v-2^Ck^2)$. Since $k\geq2\lambda+2$, $\frac{2k-3}{4\lambda}\geq1+\frac{1}{4\lambda}$.
Given any real $\alpha>0$, for every $B\in \mathcal B$,
$$d_{H}(B)\geq v-2^Ck^2=\frac{2k-3}{4\lambda}D\geq(1+\frac{1}{4\lambda})D\geq(1+D^{-\alpha})D.$$
Since $v$ is sufficiently large, $\frac{v-k}{k-1}\leq\frac{2(v-2^Ck^2)}{2k-3}$ and so $$d_H(\{x,-x\})\leq \frac{2\lambda(v-k)}{k-1}\leq \frac{4\lambda}{2k-3}(v-2^Ck^2)=D$$ for any $\{x,-x\}\in G'$.
Given any real $\beta>0$, the codegree of $H$ is at most $\max\{2k,4\lambda\}\leq D^{1-\beta}$. Then applying Theorem \ref{thm:A_perfect_matching}, there is a $\mathcal B$-perfect matching $M$ of $H$, and so we obtain a $(G,k,\lambda)$-BDF $\mathcal F$.
\end{proof}

\section{A solution to Nov\'{a}k's conjecture on cyclic BIBDs}\label{sec:Nov}

For any block $B$ of a cyclic $(v, k, \lambda)$-BIBD, the {\em block orbit} containing $B$ is the set of distinct blocks $B+i$, $i\in\Z_v$. If a block orbit has $v$ blocks, the block orbit is {\em full}. Otherwise, it is {\em short}.


\begin{Lemma}\label{lem:number_short_orbits}{\rm \cite[Lemma 3]{FHW2021}}
Let $k\geq2$ and $\lambda\geq1$ be fixed integers.
If $(V,\mathcal B)$ is a cyclic $(v, k, \lambda)$-BIBD with $h$ short orbits and $m$ full orbits, then
\begin{enumerate}
  \item[$(1)$] $h\leq2\lambda\sqrt{k}$; and
  \item[$(2)$] $\frac{\lambda(v-1)}{k(k-1)}-2\lambda\sqrt{k}\leq m\leq\frac{\lambda(v-1)}{k(k-1)}$.
\end{enumerate}
\end{Lemma}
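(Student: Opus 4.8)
The plan is to study the stabilizers of blocks under the translation action of $\mathbb{Z}_v$, after identifying $V$ with $\mathbb{Z}_v$. For a block $B$ the set $\{i\in\mathbb{Z}_v\mid B+i=B\}$ is a subgroup of $\mathbb{Z}_v$, and every translate $B+j$ has the same stabilizer; hence each orbit $O$ has a well-defined stabilizer $H_O$, of order $d_O$, and by the orbit--stabilizer theorem $O$ consists of exactly $v/d_O$ blocks. Thus a block lies in a short orbit exactly when $d_O>1$. Since $b+h\in B$ whenever $b\in B$ and $h\in H_O$, each block is a union of $H_O$-cosets, so $d_O$ divides $k$; of course $d_O\mid v$ as well.

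The core of the proof is a difference count. For each nonzero $\delta\in\mathbb{Z}_v$, counting the pairs $(x,B)$ with $x,x+\delta\in B$ and using that every $2$-subset lies in exactly $\lambda$ blocks shows that the ordered difference $\delta$ occurs exactly $\lambda v$ times over all blocks. Now fix a prime $p\mid\gcd(v,k)$ and let $P$ be the unique subgroup of $\mathbb{Z}_v$ of order $p$. If a short orbit $O$ satisfies $P\subseteq H_O$, then for any fixed $h\in P\setminus\{0\}$, every block $B'$ of $O$ and every $b\in B'$ give $b+h\in B'$, so $h$ occurs $k$ times in each of the $v/d_O$ blocks of $O$, i.e. $kv/d_O$ times in $O$. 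Comparing with the global count $\lambda v$ yields $\lambda\geq\sum_{O:\,P\subseteq H_O}k/d_O\geq\#\{O:P\subseteq H_O\}$, where the last step uses $d_O\mid k$, hence $k/d_O\geq1$. So at most $\lambda$ short orbits have $P$ inside their stabilizer.

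To finish (1), observe that every short orbit has stabilizer $H_O$ of order $d_O>1$; being cyclic, $H_O$ contains the order-$p$ subgroup $P$ for some prime $p\mid\gcd(v,k)$. Summing the bound above over all such primes gives $h\leq\lambda\,\omega(\gcd(v,k))$, where $\omega(n)$ denotes the number of distinct prime divisors of $n$. Since the product of $\omega(\gcd(v,k))$ distinct primes divides $k$, we have $\omega(\gcd(v,k))\leq\log_2 k\leq 2\sqrt{k}$, and therefore $h\leq 2\lambda\sqrt{k}$. For (2), write $b=\lambda v(v-1)/(k(k-1))$ for the number of blocks and split $\mathcal B$ into orbits, so that $b=mv+\sum_{\text{short }O}|O|$. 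The upper bound $m\leq\lambda(v-1)/(k(k-1))$ is immediate from $mv\leq b$. Each short orbit has at most $v/2$ blocks, so $\sum_{\text{short }O}|O|\leq hv/2\leq\lambda\sqrt{k}\,v$, whence $m=b/v-\frac1v\sum_{\text{short }O}|O|\geq\lambda(v-1)/(k(k-1))-\lambda\sqrt{k}\geq\lambda(v-1)/(k(k-1))-2\lambda\sqrt{k}$.

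I expect the difference-counting step to be the main obstacle: the point is to see that a fixed prime-order subgroup lying in a block's stabilizer forces the whole orbit to spend at least $v$ occurrences of each nonzero element of $P$, and then to read off from the global budget of $\lambda v$ occurrences that at most $\lambda$ orbits can do this for a given $p$. The concluding number-theoretic estimate $\omega(\gcd(v,k))\leq\log_2 k\leq 2\sqrt{k}$ and the orbit-splitting identity used for (2) are routine.
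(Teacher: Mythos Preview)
Your argument is correct. The orbit--stabiliser setup, the observation that $d_O\mid\gcd(v,k)$, the difference count showing that each nonzero $\delta$ occurs exactly $\lambda v$ times as an ordered difference across all blocks, and the deduction that at most $\lambda$ short orbits can have a given prime-order subgroup $P$ inside their stabiliser are all sound. Summing over the primes dividing $\gcd(v,k)$ and using $\omega(\gcd(v,k))\leq\log_2 k\leq 2\sqrt{k}$ gives part~(1), and the orbit decomposition of the block count gives part~(2) exactly as you wrote.

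There is nothing to compare against in the present paper: Lemma~\ref{lem:number_short_orbits} is quoted verbatim from \cite[Lemma~3]{FHW2021} and no proof is reproduced here. Your proof is essentially the standard one and matches the argument in the cited source.
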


\begin{Lemma}\label{lem:forbid_number}
Let $k\geq 1$. If $B$ is a $k$-subset of $\mathbb{Z}_v$ and $X\subseteq \mathbb{Z}_v$, then
$$|\{a\in \mathbb{Z}_v\mid X\cap (B+a)\neq\emptyset\}|\leq k|X|.$$
Moreover, if $v>k|X|$, then there exists $a\in \mathbb{Z}_v$ such that $X\cap (B+a)=\emptyset$.
\end{Lemma}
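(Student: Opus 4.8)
The plan is a direct counting argument on the set of "bad" shifts. First I would set $N=\{a\in\mathbb{Z}_v\mid X\cap(B+a)\neq\emptyset\}$ and observe that membership in $N$ forces $a$ to have a very restricted form: if $a\in N$, then some element $y$ lies in both $X$ and $B+a$, so $y=b+a$ for some $b\in B$, whence $a=y-b$. This shows $N\subseteq\{x-b\mid x\in X,\ b\in B\}$, and the latter set is the image of the map $(x,b)\mapsto x-b$ from $X\times B$, hence has at most $|X|\cdot|B|=k|X|$ elements. Therefore $|N|\le k|X|$, which is the first assertion. (For completeness one could also note the reverse inclusion holds, so $N$ equals this difference set, but only the upper bound is needed.)

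For the "moreover" part, I would simply note that if $v>k|X|$ then $|N|\le k|X|<v=|\mathbb{Z}_v|$, so $\mathbb{Z}_v\setminus N$ is nonempty; picking any $a$ in this complement gives $X\cap(B+a)=\emptyset$. I do not expect any genuine obstacle here — this is an elementary pigeonhole estimate. The only points requiring a little care are getting the containment $N\subseteq\{x-b\mid x\in X,\ b\in B\}$ in the correct direction, and remembering that we are bounding the number of \emph{shifts} $a$ rather than the number of witnessing pairs $(x,b)$ (several pairs may produce the same shift, which only helps the bound).
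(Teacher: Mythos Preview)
Your argument is correct and coincides with the paper's own proof: both observe that $a\in N$ forces $a=x-b$ for some $x\in X$, $b\in B$, giving $|N|\le |X|\cdot|B|=k|X|$, and then deduce the ``moreover'' clause immediately from $|N|<v$.
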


\begin{proof}
The case $X=\emptyset$ is trivial. Assume that $X\not=\emptyset$. If $X\cap (B+a)\neq\emptyset$, then there exist $x\in X$ and $b\in B$ such that $x=b+a$. Since $|\{a\in \mathbb{Z}_v\mid x=b+a\}|=1$ for any given $b,x\in \mathbb{Z}_v$, $|\{a\in \mathbb{Z}_v\mid X\cap (B+a)\neq\emptyset\}|\leq k|X|$.
\end{proof}

\begin{proof}[{\bf Proof of Theorem \ref{thm:novak_conj_solution-main}}]
Let $(\mathbb{Z}_v,\mathcal A)$ be a cyclic $(v,k,\lambda)$-BIBD. We pick any one of blocks from each orbit of this BIBD to form a collection $\mathcal B_1\cup \mathcal B_2$ of base blocks, where $\mathcal B_1$ and $\mathcal B_2$ are the (multi)-set of base blocks coming from short and full orbits, respectively. By Lemma \ref{lem:number_short_orbits}, $|\mathcal B_1|\leq2\lambda\sqrt{k}$.

If $\mathcal B_1\neq\emptyset$, assuming that ${\cal B}_1=\{B_1,\ldots,B_h\}$, then there exist $a_1,\ldots,a_h\in \mathbb{Z}_v$ such that $B_i+a_i$, $1\leq i\leq h$, are mutually disjoint. Indeed, we can find these elements $a_i$, $1\leq i\leq h$, greedily. We can take $a_1=0$ and write $T_1=B_1$. In general, given $a_1,\dots,a_{i-1}$ and $T_{i-1}$ for $i\geq 2$, applying Lemma \ref{lem:forbid_number} with $X=T_{i-1}$ and $B=B_i$, there is $a_i\in \mathbb{Z}_v$ such that $T_{i-1}\cap (B_i+a_i)=\emptyset$ as $v$ is sufficiently large. Take $T_i=T_{i-1}\cup (B_i+a_i)$. Since $v$ is sufficiently large and $h=|\mathcal B_1|$ is a constant depending only on $\lambda$ and $k$, there exist $a_1,\dots,a_h\in \mathbb{Z}_v$ such that $B_i+a_i$, $1\leq i\leq h$, are mutually disjoint. Let $\mathcal F_1=\{B_i+a_i:1\leq i\leq h\}$ and $T=T_h$. Note that $|T|=kh$ is a constant. If $\mathcal B_1=\emptyset$, then take $\mathcal F_1=\emptyset$ and $T=\emptyset$.

Let $G'=\mathbb{Z}_v\setminus T$. We construct an auxiliary bipartite $(k+1)$-uniform (multi)-hypergraph $H=(V(H),E(H))$, where the vertex set $V(H)=\B_2\cup G'$ are partitioned into two parts $\B_2$ and $G'$, and the (multi)-set of edges is $E(H)=\{\{B,b_1+a,,\dots,b_k+a\} \mid a\in \mathbb{Z}_v, B=\{b_1,\dots,b_k\}\in\mathcal B_2, T\cap(B+a)=\emptyset\}$.

For any $e\in E(H)$, there exist $B_e=\{b_1,\dots,b_k\}\in \mathcal B_2$ and $a_e\in \mathbb{Z}_v$ such that $e=\{B_e,b_1+a_e,\dots,b_k+a_e\}$ and let $S(e)=B_e+a_e$.

If there exists a $\mathcal B_2$-perfect matching $M$ of $H$, then $\mathcal F_1\cup\mathcal F_2$ is our desired structure, where $\mathcal F_2=\{S(e):e\in M\}$. Indeed, for any $B=\{b_1,\dots,b_k\}\in\mathcal B_2$, there is a hyperedge $e_B=\{B,b_1+a_B,,\dots,b_k+a_B\}\in M$ for some $a_B\in \mathbb{Z}_v$. By the definition of hyperedges in $H$, $T\cap(B+a_B)=\emptyset$ for all $B\in\mathcal B_2$. That is, $B'\cap(B+a_B)=\emptyset$ for all $B'\in\mathcal F_1$ and $B\in\mathcal B_2$. Since $M$ is a $\mathcal B_2$-perfect matching of $H$, $e_B\setminus\{B\}$, $B\in\mathcal B_2$, are mutually disjoint. That is, for any $B,B'\in\mathcal B_2$ and $e,e'\in M$ with $B\in e$ and $B'\in e'$, $S(e)$ and $S(e')$ are disjoint.

For any $B\in \B_2$, $d_{H}(B)$ is equal to the number of $a\in \mathbb{Z}_v$ satisfying $T\cap(B+a)=\emptyset$. By Lemma \ref{lem:forbid_number}, $d_{H}(B)\geq v-k|T|$.
For every $x\in G'$, $d_H(x)$ is at most the number $(B,b,a)$'s satisfying $b\in B\in\mathcal B_2$, $a\in \mathbb{Z}_v$ and $b+a=x$. It follows from Lemma \ref{lem:number_short_orbits}(2) that $|\mathcal B_2|\leq \frac{\lambda(v-1)}{k(k-1)}$ and so $d_H(x)\leq\frac{\lambda(v-1)}{k(k-1)}\times k=\frac{\lambda(v-1)}{k-1}$. For every $B\in\mathcal B_2$ and $x\in G'$, the codegree of $B$ and $x$ in $H$ is at most $k$.
Since $\{B+a\mid B\in\mathcal B_2,a\in \mathbb{Z}_v\}\subseteq\mathcal A$ and $(\mathbb{Z}_v,\mathcal A)$ is a cyclic $(v,k,\lambda)$-BIBD,  there exist at most $\lambda$ pairs $(B,a)$'s with $B\in \B_2$ and $a\in \mathbb{Z}_v$ such that $\{x_1,x_2\}\subseteq B+a$ for any distinct $x_1$ and $x_2$ in $G'$. Thus the codegree of $x_1$ and $x_2$ in $H$ is at most $\lambda$.

For a sufficiently large $v$, let $D=\frac{2\lambda}{2k-3}(v-k|T|)$. Since $k\geq \lambda+2$, $\frac{2k-3}{2\lambda}\geq 1+\frac{1}{2\lambda}$. Given any real $\alpha>0$, for every $B\in \mathcal B_2$, we have
$$d_{H}(B)\geq v-k|T|=\frac{2k-3}{2\lambda}D\geq(1+\frac{1}{2\lambda})D\geq(1+D^{-\alpha})D.$$
Since $v$ is sufficiently large, $\frac{v-1}{k-1}\leq\frac{2(v-k|T|)}{2k-3}$ and hence
$$d_H(x)\leq \frac{\lambda(v-1)}{k-1}\leq \frac{2\lambda}{2k-3}(v-k|T|)=D$$ for any $x\in G'$.
Given any real $\beta>0$, the codegree of $H$ is at most $\max\{k,\lambda\}\leq D^{1-\beta}$. Then apply Theorem \ref{thm:A_perfect_matching} to obtain a $\mathcal B_2$-perfect matching $M$ of $H$.
\end{proof}

\section{Concluding remarks}

This paper investigates the asymptotic existence of $(v,k,\lambda)$-BIBDs having a (perfect) nesting. A $(v,k,\lambda)$-BIBD has a nesting if and only if its Levi graph has a harmonious coloring with $v$ colors. Theorem \ref{thm:nested_BIBD} shows that every $(v,k,\lambda)$-BIBD can be nested into a $(v,k+1,\lambda+1)$-packing for any $k\geq 2\lambda+2$ and large enough $v$.
Theorem \ref{thm:perfect_nested} shows that for $k=2\lambda+1$, for any sufficiently large $v$ satisfying $v \equiv 1 \pmod {2k}$, there exists a $(v,k,\lambda)$-BIBD having a perfect nesting. A natural and interesting problem is whether every $(v,k,\lambda)$-BIBD for $k=2\lambda+1$ has a perfect nesting when $v$ is sufficiently large.


Theorem \ref{thm:DF-Banff_DF} shows that if $G$ is a finite abelian group with a large size whose number of $2$-order elements is no more than a given constant, and $k\geq 2\lambda+2$, then a $(G,k,\lambda)$-BDF can be obtained by taking any $(G,k,\lambda)$-DF and then
replacing each of its base blocks by a suitable translation. This gives an asymptotic solution to a conjecture in \cite[Section 3]{BMNR23}, where it is conjectured that every $(\mathbb{Z}_v,k,1)$-DF can yield a $(\mathbb{Z}_v,k,1)$-BDF.

The condition that the number of $2$-order elements in $G$ is at most a given constant $C$ in Theorem \ref{thm:DF-Banff_DF} is necessary. For example, every $((\mathbb{F}_{2^n},+),k,\lambda)$-DF cannot produce an $((\mathbb{F}_{2^n},+),k,\lambda)$-BDF since $B=-B$ for any $k$-subset $B\subseteq \mathbb{F}_{2^n}$.

Theorem \ref{thm:novak_conj_solution-main} gives a solution to Conjecture \ref{conj:novak_conj_gener} when $k\geq \lambda+2$. There is a much stronger conjecture to be considered. It was conjectured in \cite[Conjecture 2]{FHW2021} that for any cyclic $(v,k,1)$-BIBD (here $v$ is not required to be large enough but $\lambda=1$), it is always possible to choose one block from each block orbit so that the chosen blocks are pairwise disjoint. This conjecture was confirmed for any prime $v\equiv 1\pmod{k(k-1)}$ by using the Combinatorial Nullstellensatz in \cite[Theorem 1]{FHW2021}.

\section*{Acknowledgements}

We are grateful to Professor Marco Buratti for his valuable discussions. We thank Professor Douglas R. Stinson for his helpful comments which motivated us to give Section \ref{sec:Thm2}.

\section*{Conflict of Interest}

The authors declare that there are no conflicts of interest that are relevant to the content of this article.

\section*{Data availability}

Data sharing not applicable to this article as no datasets were generated or analysed during the current study.

\end{document}